\numberwithin{equation}{subsection}
\theoremstyle{definition}
\newtheorem{Def}[subsection]{Definition}
\newtheorem{Exm}[subsection]{Example}
\newtheorem{Rem}[subsection]{Remark}
\newtheorem{Thm}[subsection]{Theorem}
\newtheorem{Prop}[subsection]{Proposition}
\newtheorem{Cor}[subsection]{Corollary}
\begin{document}
\title[]{$W$-triviality of low dimensional manifolds}

\author{Aritra C Bhattacharya}
\address{619 Red Cedar Road, Wells Hall, Michigan State University, East Lansing, MI-48824, USA.}
\email{bhatta94@msu.edu}

\author{Bikramjit Kundu}
\address{Department of Mathematics, Indian Institute of Technology, Roorkee, Haridwar Road, 247667, India.}
\email{bikramju@gmail.com, bikramjit.pd@ma.iitr.ac.in}

\author{Aniruddha C Naolekar}
\address{Stat-Math Unit, Indian Statistical Institute, 8th Mile, Mysore Road, RVCE Post, Bangalore 560059, INDIA.}
\email{ani@isibang.ac.in}

\thanks{}

\keywords{Stiefel-Whitney class, $W$-trivial}
\subjclass[2010]{57R20}

\begin{abstract} 
A space $X$ is $W$-trivial if for every real vector bundle $\alpha$ over $X$ the total Stiefel-Whitney class $w(\alpha)$ is 1. It follows from a result of Milnor that  if $X$ is an orientable closed smooth manifold of dimension $1,2,4$ or $8$, then $X$ is not $W$-trivial. 
In this note we completely characterize $W$-trivial orientable connected  closed smooth manifolds in dimensions $3,5$ and $6$. In dimension $7$, we describe necessary conditions for an orientable connected closed smooth $7$-manifold to be $W$-trivial. 
\end{abstract}
 
\maketitle

\section{Introduction.}

A theorem of Milnor \cite{milnor} (see also \cite[Theorem\,1, page 223]{atiyah}) states the following. 

\begin{Thm}\cite[Theorem\,1, page 444]{milnor}\label{milnor}
There exists a real vector bundle $\alpha$ over the sphere $S^n$ with $w_n(\alpha)\neq 0$ if and only if $n= 1,2,4,8$.\qed
\end{Thm}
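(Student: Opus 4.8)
The plan is to prove the two implications by completely different means: the existence half by writing down explicit bundles, and the nonexistence half by translating the condition $w_n(\alpha)\neq 0$ into a Hopf-invariant-one problem and quoting Adams.

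\textbf{Realizing $w_n\neq 0$ for $n=1,2,4,8$.} In exactly these dimensions one has the normed real division algebras $\mathbb{R},\mathbb{C},\mathbb{H},\mathbb{O}$, and hence the projective lines $\mathbb{RP}^1\cong S^1$, $\mathbb{CP}^1\cong S^2$, $\mathbb{HP}^1\cong S^4$, $\mathbb{OP}^1\cong S^8$. I would take $\alpha$ to be the underlying real bundle of the tautological line bundle, a rank-$n$ bundle associated to the Hopf fibration $S^{n-1}\hookrightarrow S^{2n-1}\to S^n$. A short computation shows its Euler number is $\pm1$, so the mod-$2$ Euler class, which equals $w_n(\alpha)$, generates $H^n(S^n;\mathbb{Z}/2)$; for $n=1$ this bundle is just the M\"obius band. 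This settles the ``if'' direction.

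\textbf{From $w_n\neq 0$ to a two-cell complex.} For the converse, suppose $\alpha\to S^n$ has $w_n(\alpha)\neq 0$. Since the total Stiefel--Whitney class is stable and every stable class over the $n$-complex $S^n$ is realized by a rank-$n$ bundle, I may assume $\operatorname{rank}\alpha=n$. I would then pass to the Thom space $T\alpha$. As $S^n$ carries cells only in dimensions $0$ and $n$, the Thom space is a two-cell complex $T\alpha\simeq S^n\cup_f e^{2n}$ with attaching map $f\colon S^{2n-1}\to S^n$, and the Thom class $U\in H^n(T\alpha;\mathbb{Z}/2)$ satisfies $U^2=Sq^nU=\Phi(w_n(\alpha))$. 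Thus $w_n(\alpha)\neq 0$ is exactly the statement that $U^2$ generates $H^{2n}(T\alpha;\mathbb{Z}/2)$, i.e. that $f$ has Hopf invariant one modulo $2$.

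\textbf{The obstacle.} It remains to show that maps $S^{2n-1}\to S^n$ of odd (mod $2$) Hopf invariant occur only for $n=1,2,4,8$, and this is the whole difficulty. A first, elementary reduction uses the Adem relations: if $n$ is not a power of $2$ then $Sq^n$ is a sum of composites $Sq^{n-i}Sq^i$ with $0<i<n$, each of which kills $U$ because $H^{n+i}(T\alpha;\mathbb{Z}/2)=0$ for $0<i<n$; hence $Sq^nU=0$ and $n$ must be a power of $2$. Eliminating $n=2^k$ for $k\geq 4$ is the deep step, and is precisely Adams' solution of the Hopf invariant one problem via secondary cohomology operations (alternatively via $K$-theory and the Adams operations). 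I expect this to be the crux: the positive direction is a one-line computation, whereas excluding the infinitely many remaining dimensions — in particular the cases $n\equiv 0,1,2\pmod 8$ with $n>8$, where $\widetilde{KO}(S^n)\neq 0$ so nontrivial stable bundles do exist yet must have $w_n=0$ — cannot be done by elementary means and rests on this secondary-operation argument.
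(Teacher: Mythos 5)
Your proposal is mathematically sound, but it is not the argument the paper points to: the paper gives no proof at all, quoting the statement from Milnor \cite{milnor}, and the proof in that reference runs along genuinely different lines from yours. Your route converts $w_n(\alpha)\neq 0$ into the nonvanishing of $Sq^n$ on the Thom space $T\alpha\simeq S^n\cup_f e^{2n}$, kills the non--$2$-power dimensions by the Adem relations, and then invokes Adams' solution of the Hopf invariant one problem for $n=2^k\geq 16$; this is the standard modern route, but it deduces the theorem from a strictly stronger result that postdates Milnor's paper. Milnor's own proof --- the ``theorem of Bott'' of his title --- avoids the Hopf invariant entirely: since $H^i(S^n;\mathbb{Z}_2)=0$ for $0<i<n$, the Whitney formula makes $\alpha\mapsto w_n(\alpha)$ additive, so $w_n$ defines a homomorphism on the group of stable bundle classes over $S^n$ (in modern notation $\widetilde{KO}(S^n)$, in Milnor's terms $\pi_{n-1}(SO)$, computed by Bott periodicity); this group vanishes for $n\equiv 3,5,6,7\pmod 8$, and in the remaining congruence classes Milnor uses Bott's integrality theorem --- the divisibility properties of the Chern/Pontryagin classes of bundles generating these groups --- to show that $w_n$ vanishes on the generators unless $n=1,2,4,8$. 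The $K$-theoretic alternative you mention in passing is essentially the Atiyah--Hirzebruch approach \cite{atiyah}, the other reference the paper cites. What each approach buys: Milnor's needs only Bott periodicity plus characteristic-class integrality (which is why the theorem could be proved in 1958), while yours is quicker to state but rests on the deeper theorem of Adams. One small correction to your closing remark: after your Adem-relation step the only surviving dimensions are powers of $2$, so the secondary-operation input is needed only for $n=2^k\geq 16$; dimensions such as $n=9,10,18$, where $\widetilde{KO}(S^n)=\mathbb{Z}_2\neq 0$, are already eliminated by the Adem step, not by Adams.
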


 For $n\in\{1,2,4,8\}$, the bundles in Milnor's theorem are the famous Hopf bundles. Since $H^i(S^n;\mathbb{Z}_2)=0$ for $i\neq0,n$, for a real vector bundle $\alpha$ over $S^n$, $w(\alpha)=1$ if and only if $w_n(\alpha)=0$.

We recall the following definition from \cite{tanaka}. 

\begin{Def}
A space $X$ is said to be $W$-trivial if for every real vector bundle $\alpha$ over $X$, the total Stiefel-Whitney class $w(\alpha)$ is $1$.
\end{Def}

In view of the above definition, Milnor's theorem can be restated as: $S^n$ is $W$-trivial if and only if $n\neq 1,2,4,8$.


In this note we address the question if there are orientable connected  closed smooth manifolds other than the spheres that are $W$-trivial. We completely characterize $W$-trivial manifolds in dimensions $3,5,6$ and prove a necessary condition for a manifold to be $W$-trivial in dimension $7$. Before stating the main results we make a few remarks. 

\begin{Rem} \label{rem} 
\begin{enumerate}
\item If $n=1,2,4,8$ and $X$ is an orientable connected  closed smooth $n$-manifold, then $X$ is not $W$-trivial. This is because if $f:X\longrightarrow S^n$ is a degree $1$ map, then 
$$w_n(f^*\alpha)\neq 0$$
where $\alpha$ is the Hopf bundle on $S^n$.

\item  Recall that, for a commutative ring $R$, a $R$-homology $n$-sphere is an orientable connected  closed smooth $n$-manifold $X$ with $H_i(X;R)\cong H_i(S^n;R)$ for all $i$. It is known \cite[page 94]{milnor} that for a real vector bundle $\alpha$ the smallest positive integer $n$ for which $w_n(\alpha)\neq 0$ is a power of $2$. Thus if $X$ is a $\mathbb Z_2$-homology $n$-sphere and $n$ is not a power of $2$, then $X$ is $W$-trivial.

\item It would be interesting to know if Theorem\,\ref{milnor} is true for $\mathbb Z_2$-homology spheres. 
\end{enumerate}
\end{Rem}

We now state our main results. We assume that all manifolds that appear in the sequel are connected. 

\begin{Thm}\label{main} 
Let $X$ be an orientable closed smooth manifold of dimension either $3$ or $5$. Then $X$ is $W$-trivial if and only if  $X$ is a $\mathbb Z_2$-homology sphere. \qed
\end{Thm}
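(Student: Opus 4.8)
The forward implication is immediate from Remark~\ref{rem}(2): since $3$ and $5$ are not powers of $2$, every $\mathbb{Z}_2$-homology sphere of dimension $3$ or $5$ is $W$-trivial. So the content of the theorem is the converse, which I would prove in contrapositive form: if $X$ is not a $\mathbb{Z}_2$-homology sphere, then $X$ carries a real vector bundle with nontrivial total Stiefel--Whitney class. The first step is to translate the hypothesis using Poincar\'e duality over $\mathbb{Z}_2$. For a closed connected $n$-manifold one has $\dim_{\mathbb{Z}_2} H^i(X;\mathbb{Z}_2)=\dim_{\mathbb{Z}_2} H^{n-i}(X;\mathbb{Z}_2)$. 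Hence in dimension $3$ the manifold $X$ fails to be a $\mathbb{Z}_2$-homology sphere exactly when $H^1(X;\mathbb{Z}_2)\neq 0$, and in dimension $5$ exactly when $H^1(X;\mathbb{Z}_2)\neq 0$ or $H^2(X;\mathbb{Z}_2)\neq 0$.

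The case $H^1(X;\mathbb{Z}_2)\neq 0$, which settles dimension $3$ completely and half of dimension $5$, is the easy one. Since real line bundles over $X$ are classified by $H^1(X;\mathbb{Z}_2)$ via the first Stiefel--Whitney class, any nonzero $x\in H^1(X;\mathbb{Z}_2)$ is realized as $w_1(L)$ for some line bundle $L$, and then $w(L)=1+x\neq 1$, so $X$ is not $W$-trivial.

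This leaves the essential case: $\dim X=5$ and $H^2(X;\mathbb{Z}_2)\neq 0$. Here I would fix a nonzero $y\in H^2(X;\mathbb{Z}_2)$ and realize it as $w_2(\alpha)$ for an oriented bundle $\alpha$, by viewing $y$ as a map $X\to K(\mathbb{Z}_2,2)$ and lifting it along the map $w_2\colon BSO\to K(\mathbb{Z}_2,2)$. The homotopy fiber of this map is $BSpin$, which is $3$-connected with $\pi_4(BSpin)\cong\mathbb{Z}$. By $3$-connectivity of the fiber (killing obstructions in degrees $\le 3$) and by $\dim X=5$ (killing obstructions in degrees $\ge 6$), the only obstruction to a lift over $X$ lies in $H^5(X;\pi_4(BSpin))\cong H^5(X;\mathbb{Z})$, and it equals the pullback $y^*k$ of the corresponding $k$-invariant $k\in H^5(K(\mathbb{Z}_2,2);\mathbb{Z})$. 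The point is that $H^5(K(\mathbb{Z}_2,2);\mathbb{Z})$ is a torsion group, so $y^*k$ is a torsion element; since $X$ is orientable and closed, $H^5(X;\mathbb{Z})\cong\mathbb{Z}$ is torsion-free, forcing $y^*k=0$. Thus $y$ is realized as $w_2(\alpha)$ and $w(\alpha)\neq 1$.

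I expect the main obstacle to be precisely this last step, realizing an arbitrary class of $H^2(X;\mathbb{Z}_2)$ as a second Stiefel--Whitney class. One cannot simply use complex line bundles, since those realize only the integral classes, that is, the image of $H^2(X;\mathbb{Z})\to H^2(X;\mathbb{Z}_2)$; nor does the tangent bundle suffice, since its second Stiefel--Whitney class is the Wu class $v_2$, which may vanish even when $H^2(X;\mathbb{Z}_2)\neq 0$. The obstruction-theoretic argument handles all classes, integral or not, uniformly, and it is the orientability of $X$, entering through the torsion-freeness of $H^5(X;\mathbb{Z})$, that makes the single obstruction disappear.
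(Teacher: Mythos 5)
Your proof is correct, but for the essential step it takes a genuinely different route from the paper. The paper proves the converse in the same logical direction as the statement: assuming $W$-triviality, it first invokes Proposition~\ref{firstprop} and Poincar\'e duality to pin down the integral homology of $X$, observes that in dimension $5$ this forces $H^4(X;\mathbb{Z}_4)=0$, and then quotes the \v{C}adek--Van\v{z}ura classification of oriented rank-$5$ bundles over $5$-complexes (Theorem~\ref{cadek}): since the realizability condition $\rho_4(c)=\mathfrak{P}(a)+i_*(b)$ becomes vacuous, any nonzero $a\in H^2(X;\mathbb{Z}_2)$ would be $w_2$ of a bundle, a contradiction. You instead work contrapositively and replace the appeal to the classification theorem by a direct obstruction-theoretic argument along the fibration $BSpin\to BSO\xrightarrow{w_2}K(\mathbb{Z}_2,2)$: the only obstruction lies in $H^5(X;\pi_4(BSpin))\cong H^5(X;\mathbb{Z})\cong\mathbb{Z}$, is pulled back from the torsion group $H^5(K(\mathbb{Z}_2,2);\mathbb{Z})$, and hence vanishes. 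Your route buys a cleaner and in fact stronger intermediate statement -- on \emph{any} closed orientable $5$-manifold, every class of $H^2(X;\mathbb{Z}_2)$ is $w_2$ of some bundle, with no $W$-triviality hypothesis and none of the homological bookkeeping of Proposition~\ref{firstprop} -- at the cost of carrying out the homotopy theory by hand (identification of the fiber, naturality of the primary obstruction). The paper's route keeps the homotopy theory hidden inside a cited theorem, and its homological analysis is not wasted since it is reused verbatim in dimensions $6$ and $7$. It is worth noting that the two arguments exploit the same phenomenon: orientability makes $H^5(X;\mathbb{Z})$ torsion-free, which in the paper's setting (via the Bockstein sequence for $0\to\mathbb{Z}\to\mathbb{Z}\to\mathbb{Z}_4\to 0$, as the paper does explicitly in dimension $6$) shows $\rho_4\colon H^4(X;\mathbb{Z})\to H^4(X;\mathbb{Z}_4)$ is onto, so the \v{C}adek--Van\v{z}ura condition is always solvable for any prescribed $a$; your $k$-invariant argument is the obstruction-theoretic shadow of exactly this fact.
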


In dimension $6$ there exist $W$-trivial manifolds $X$  that are not $\mathbb Z_2$-homology spheres. For example 
$S^3\times S^3$ is $W$-trivial (see Remark\,\ref{rem} (2) above). 
In dimension $6$ we prove the following. 

\begin{Thm}\label{second}
Let $X$ be an orientable closed smooth manifold of dimension $6$. Then $X$ is $W$-trivial if and only if 
$$
H_i(X;\mathbb Z)=\left\{ \begin{array}{cl}
\mathbb Z & i=0\\
F& i=1\\
F' & i=2\\
\mathbb Z^t\oplus F' & i= 3\\
 F& i=4\\
0 & i=5\\
\mathbb Z & i=6
\end{array}\right.
$$
where $F,F'$ are finite abelian groups neither of which contain any element of order $2$ and $t\geq 0$. \qed
\end{Thm}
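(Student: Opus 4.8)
The plan is to reduce the theorem to a statement purely about the mod $2$ cohomology ring and then treat the two implications separately. A universal-coefficients and Poincar\'e duality computation shows that the integral homology has the displayed form \emph{if and only if} $H^i(X;\mathbb{Z}_2)=0$ for $i=1,2,4,5$, i.e.\ $H^*(X;\mathbb{Z}_2)$ is concentrated in degrees $0,3,6$. Indeed, since $H_0=\mathbb{Z}$, the condition $H^1(X;\mathbb{Z}_2)=0$ is equivalent to $H_1$ being finite of odd order ($=F$), and given this, $H^2(X;\mathbb{Z}_2)=\mathrm{Hom}(H_2,\mathbb{Z}_2)$ vanishes exactly when $H_2$ is finite of odd order ($=F'$). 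Integral Poincar\'e duality $H_k\cong H^{6-k}$ together with the universal coefficient splitting $H^{6-k}\cong \mathbb{Z}^{b_{6-k}}\oplus T_{6-k-1}$ then forces $b_2=b_4=0$, $T_4=T_1=F$, $T_3=T_2=F'$, $T_5=0$, yielding $H_4=F$, $H_5=0$, $H_3=\mathbb{Z}^t\oplus F'$; and mod $2$ Poincar\'e duality makes the vanishing in degrees $4,5$ automatic from that in degrees $2,1$. So it suffices to prove that $X$ is $W$-trivial iff $H^i(X;\mathbb{Z}_2)=0$ for $i=1,2$.

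Sufficiency is short. Assume $H^1(X;\mathbb{Z}_2)=H^2(X;\mathbb{Z}_2)=0$; by mod $2$ Poincar\'e duality also $H^4(X;\mathbb{Z}_2)=H^5(X;\mathbb{Z}_2)=0$. Let $\alpha$ be a real vector bundle and suppose $w(\alpha)\neq 1$; let $n$ be least with $w_n(\alpha)\neq0$. By the result quoted in Remark~\ref{rem}(2), $n$ must be a power of $2$. But the only powers of $2$ with $1\le n\le 6$ are $1,2,4$, and $H^1(X;\mathbb{Z}_2)=H^2(X;\mathbb{Z}_2)=H^4(X;\mathbb{Z}_2)=0$, so $w_n(\alpha)=0$, a contradiction. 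Hence $w(\alpha)=1$ and $X$ is $W$-trivial.

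Necessity is where the work lies. If $H^1(X;\mathbb{Z}_2)\neq0$, any $0\neq a\in H^1(X;\mathbb{Z}_2)$ equals $w_1$ of the associated real line bundle, so $X$ is not $W$-trivial; thus $W$-triviality forces $H^1(X;\mathbb{Z}_2)=0$, whence $H_1$ is finite of odd order. The crux is to show $H^2(X;\mathbb{Z}_2)=0$. Assume not, and fix $0\neq x\in H^2(X;\mathbb{Z}_2)$; since $H^1=0$ every bundle is orientable, and I would realize $x$ as $w_2$ of an oriented bundle by an obstruction-theoretic lift along $w_2\colon BSO\to K(\mathbb{Z}_2,2)$. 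Represent $x$ by $g_x\colon X\to K(\mathbb{Z}_2,2)$. The fiber of $w_2$ is $BSpin$, whose first nonvanishing homotopy above degree $2$ is $\pi_4=\mathbb{Z}$; since $\dim X=6$, the only obstruction to lifting $g_x$ to $BSO$ is the pullback $g_x^*(k)$ of the relevant $k$-invariant, lying in $H^5(X;\mathbb{Z})$. This $k$-invariant is $2$-torsion, as all reduced integral cohomology of $K(\mathbb{Z}_2,2)$ is $2$-primary, while $H^5(X;\mathbb{Z})\cong H_1(X;\mathbb{Z})=F$ has \emph{odd} order by Poincar\'e duality; hence $g_x^*(k)=0$ and the lift exists. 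The resulting oriented bundle has $w_2=x\neq0$, contradicting $W$-triviality. Therefore $H^2(X;\mathbb{Z}_2)=0$, i.e.\ $H_2$ is finite of odd order, and the homological form follows from the computation in the first paragraph.

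The main obstacle is precisely this realizability step: one must verify that over a $6$-manifold every degree-$2$ mod $2$ class is the $w_2$ of an oriented bundle, and the whole argument hinges on the numerical coincidence that the single obstruction lives in $H^5(X;\mathbb{Z})\cong H_1(X)$, which $W$-triviality has already forced to have odd order, so that the inherently $2$-primary obstruction must vanish. I would take care to check that the coefficient system is untwisted (the fiber maps to the simply connected base $K(\mathbb{Z}_2,2)$, so $\pi_1(X)$ acts trivially on $\pi_4$ of the fiber) and that no further obstructions intervene below degree $7$, since $\pi_k(BSO)=0$ for $k=5,6$.
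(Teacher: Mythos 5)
Your proof is correct, and at the decisive step it follows a genuinely different route from the paper's. Your reduction to the statement ``$X$ is $W$-trivial iff $H^1(X;\mathbb Z_2)=H^2(X;\mathbb Z_2)=0$'' and your sufficiency argument (the first nonzero Stiefel--Whitney class occurs in degree a power of $2$, and degrees $1,2,4$ carry no mod $2$ cohomology) coincide with the paper's universal-coefficient/Poincar\'e-duality bookkeeping and its converse direction. The difference is how $H^2(X;\mathbb Z_2)=0$ is forced: the paper quotes the \v{C}adek--Van\v{z}ura classification of oriented bundles over complexes of dimension $\leq 7$ (Theorem~\ref{cadek1}) and realizes the triple $(a,0,c)$, where $c\in H^4(X;\mathbb Z)$ with $\rho_4(c)=\mathfrak{P}(a)$ exists because the Bockstein $H^4(X;\mathbb Z_4)\to H^5(X;\mathbb Z)\cong F$ is zero (a $2$-group mapping to an odd-order group), so $\rho_4$ is onto; you instead run Moore--Postnikov obstruction theory directly on $BSpin\to BSO\stackrel{w_2}{\longrightarrow} K(\mathbb Z_2,2)$, noting that the single obstruction is the pullback of a $2$-primary $k$-invariant into the odd-order group $H^5(X;\mathbb Z)\cong H_1(X;\mathbb Z)$, hence vanishes. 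These are two packagings of the same arithmetic coincidence---the \v{C}adek--Van\v{z}ura relation $\rho_4(p_1)=\mathfrak{P}(w_2)+i_*(w_4)$ is precisely the $k$-invariant data you handle by hand---so what your route buys is self-containedness (no external classification theorem), at the cost of the obstruction-theoretic care you correctly supply (untwisted coefficients since $K(\mathbb Z_2,2)$ is simply connected; vanishing of $\pi_5$ and $\pi_6$ of the fiber together with $\dim X=6$, so no higher obstructions), while the paper's route is shorter given the citation and also yields realizability of full triples $(w_2,w_4,p_1)$. One terminological quibble: the relevant $k$-invariant is $2$-primary but not of order $2$ (it has order $4$ in $H^5(K(\mathbb Z_2,2);\mathbb Z)\cong\mathbb Z_4$), so your ``$2$-torsion'' should be read as ``$2$-primary,'' exactly as your own justification indicates; the argument is unaffected.
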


For example $S^3\times S^3$ is a $6$-manifold whose homology groups are of the form described in the above theorem and hence is $W$-trivial. In dimension $7$ we derive the following homological restriction on $W$-trivial manifolds. 

\begin{Thm}\label{seven}
Let $X$ be an orientable closed smooth $7$-manifold. If $X$ is $W$-trivial, then the integral homology groups of $X$ are of the form 
$$
H_i(X;\mathbb Z)=\left\{ \begin{array}{cl}
\mathbb Z & i=0\\
F& i=1\\
F' & i=2\\
F'' & i= 3\\
 F'& i=4\\
F & i=5\\
0 & i=6\\
\mathbb Z & i=7
\end{array}\right.
$$
where $F,F', F''$ are finite abelian groups with $F,F''$ not having any element of order $2$. The converse is true if in addition $F'$ does not contain any element of order $2$. \qed
\end{Thm}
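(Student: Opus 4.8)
The argument splits into the necessity direction, which carries the content, and a short converse. Throughout let $\rho\colon H^k(X;\mathbb Z)\to H^k(X;\mathbb Z_2)$ denote reduction mod $2$, and recall from Bott periodicity that in the range $1\le i\le 7$ the only nonzero homotopy groups of $BO$ are $\pi_1(BO)=\pi_2(BO)=\mathbb Z_2$ and $\pi_4(BO)=\mathbb Z$. The plan is to exhibit, for each of $w_1,w_2,w_4$, a supply of bundles whose Stiefel--Whitney classes I can compute explicitly, to use $W$-triviality to force these classes to vanish, and finally to convert the resulting vanishing statements into the stated integral homology via the universal coefficient theorem and Poincaré duality.

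I would begin with the low classes. Real line bundles realize every element of $H^1(X;\mathbb Z_2)$ as a first Stiefel--Whitney class, so $W$-triviality forces $H^1(X;\mathbb Z_2)=0$, which by universal coefficients means $H_1(X;\mathbb Z)$ is finite with no element of order $2$ and $b_1=0$. The realification of a complex line bundle with first Chern class $c$ is an oriented plane bundle with $w_2=\rho(c)$, so every class in the image of $\rho$ on $H^2(X;\mathbb Z)$ is realized as a $w_2$; $W$-triviality forces this image to vanish, i.e. $H^2(X;\mathbb Z)\otimes\mathbb Z_2=0$, whence $b_2=0$.

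The heart of the proof is $w_4$. Since $\pi_3(BO)=\pi_5(BO)=\pi_6(BO)=\pi_7(BO)=0$, the Postnikov truncation $BO\to\tau_{\le 4}BO$ has $7$-connected fibre, hence is a bijection on homotopy classes of maps out of the $7$-complex $X$; thus a stable bundle on $X$ is recorded by $w_1$, $w_2$ and a class $u\in H^4(X;\mathbb Z)$. Choosing the classifying map to be trivial on the $3$-skeleton (so $w_1=w_2=0$) makes the pullbacks of the $k$-invariants of $\tau_{\le4}BO$ --- decomposable classes in $w_1,w_2$ --- vanish; the next obstruction lies in $H^5(X;\pi_4(BO))=H^5(X;\mathbb Z)$ and, being natural in $u$, is the pullback of a class in $H^5(K(\mathbb Z,4);\mathbb Z)=0$, so it vanishes, while the remaining obstructions lie in $H^6(X;0)$ and $H^7(X;0)$. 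Hence every $u\in H^4(X;\mathbb Z)$ is realized by a bundle $\xi_u$ over $X$ with $w_1=w_2=0$. As the generator of $\pi_4(BO)$ is the stabilized quaternionic Hopf bundle, whose $w_4$ is nonzero --- precisely the case $n=4$ of Milnor's Theorem~\ref{milnor} --- the fibre inclusion $K(\mathbb Z,4)\hookrightarrow\tau_{\le4}BO$ carries $w_4$ to $\rho(\iota_4)\ne0$, giving $w_4(\xi_u)=\rho(u)$. Therefore $W$-triviality forces $\rho=0$ on $H^4(X;\mathbb Z)$, that is $H^4(X;\mathbb Z)\otimes\mathbb Z_2=0$. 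By universal coefficients $H^4(X;\mathbb Z)\cong\mathbb Z^{b_4}\oplus\mathrm{tors}\,H_3(X;\mathbb Z)$, so this single vanishing yields at once $b_4=0$ and that the torsion of $H_3(X;\mathbb Z)$ has no element of order $2$.

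Poincaré duality now closes the necessity direction: the relations $b_i=b_{7-i}$ and $\mathrm{tors}\,H_i\cong\mathrm{tors}\,H_{6-i}$ upgrade $b_1=b_2=b_4=0$ to $b_i=0$ for all $1\le i\le6$, force $H_6(X;\mathbb Z)=0$, and identify $H_5\cong H_1=F$ and $H_4\cong H_2=F'$, with $F$ and $F''=H_3$ free of $2$-torsion as found above --- exactly the asserted list. Note that the bundles $\xi_u$ all have $w_1=w_2=w_3=0$, so this construction detects only $w_4$ and places no constraint on $F'$, consistent with the statement. For the converse one assumes in addition that $F'$ has no element of order $2$; then $F,F',F''$ all have odd order, and a direct universal-coefficient computation gives $H^i(X;\mathbb Z_2)=0$ for $1\le i\le6$, so $X$ is a $\mathbb Z_2$-homology $7$-sphere and, since $7$ is not a power of $2$, is $W$-trivial by Remark~\ref{rem}(2). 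I expect the main obstacle to be the $w_4$ step: one must justify carefully both the realizability of an arbitrary $u\in H^4(X;\mathbb Z)$ by a bundle over all of $X$ (the decomposability of the $k$-invariants and the vanishing of the degree-$5$ obstruction) and the identity $w_4(\xi_u)=\rho(u)$; the remaining homological passages are routine bookkeeping with $\mathrm{Hom}$ and $\mathrm{Ext}$.
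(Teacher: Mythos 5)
Your proposal is correct, and at the decisive step it takes a genuinely different route from the paper. Both arguments reduce Theorem~\ref{seven} to the same key claim, namely that $\rho_2\colon H^4(X;\mathbb Z)\longrightarrow H^4(X;\mathbb Z_2)$ is the zero map (equivalently, in the paper's notation, $t=s''=0$), after which universal coefficients and Poincar\'e duality produce the stated homology; the converses are identical (a $\mathbb Z_2$-homology $7$-sphere is $W$-trivial since $7$ is not a power of $2$, Proposition~\ref{secondprop}(2)). The difference is how a nonzero $\rho_2(u)$ gets realized as a $w_4$. The paper invokes the \v{C}adek--Van\v{z}ura existence theorem for oriented rank-$5$ bundles over $7$-complexes (Theorem~\ref{cad2}), which requires verifying $Sq^2W=0$ and the secondary-operation condition $0\in\Phi(U)$, the latter outsourced to \cite{ani1}. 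You instead work stably: Bott periodicity gives $\pi_3=\pi_5=\pi_6=\pi_7(BO)=0$ and $\pi_4(BO)=\mathbb Z$, so $[X,BO]\to[X,\tau_{\le 4}BO]$ is bijective for a $7$-complex; realizing $u\in H^4(X;\mathbb Z)$ through the fibre inclusion $K(\mathbb Z,4)\to\tau_{\le 4}BO$ produces a bundle $\xi_u$ with $w_1=w_2=0$ and $w_4(\xi_u)=\rho_2(u)$, the last identity coming from the fact that the generator of $\pi_4(BO)$ is the stable quaternionic Hopf bundle, whose nonvanishing $w_4$ is exactly the $n=4$ case of Theorem~\ref{milnor}. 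Your route buys independence from secondary cohomology operations and from the rank-$5$ classification, using only ingredients already present in the paper (Milnor's theorem plus standard Postnikov theory), at the harmless cost of working with stable bundles, since Stiefel--Whitney classes are stable invariants.

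One presentational remark: the skeleton-by-skeleton obstruction discussion in your middle paragraph (pullbacks of $k$-invariants, the obstruction in $H^5(X;\mathbb Z)$) is both informal and unnecessary. Since you in any case factor through the fibre inclusion $j\colon K(\mathbb Z,4)\to\tau_{\le 4}BO$, you can simply define $\xi_u$ to be the unique stable bundle whose truncation is $j\circ u$; then $w_1=w_2=0$ and $w_4(\xi_u)=u^*j^*w_4=\rho_2(u)$ hold by construction, with no $k$-invariant analysis at all. Equivalently, the map $BSpin\to K(\mathbb Z,4)$ given by $p_1/2$ is $8$-connected, so spin bundles over a $7$-complex are classified by $H^4(X;\mathbb Z)$, and $w_4=\rho_2(p_1/2)$ holds universally on $BSpin$, verified on the Hopf bundle.
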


In particular, if $X$ is a $W$-trivial orientable closed smooth $7$-manifold, then $X$ is a rational homology $7$-sphere. We have not been able to decide whether or not there is a $W$-trivial oriented closed smooth $7$-manifold $X$ such that $ H_2(X;\mathbb Z)$ contains an  element of order $2$. 

We end this section by  looking at some examples and non-examples. 
 Note that in dimensions $3,5$ and $7$ every $W$-trivial manifold turns out to be a rational homology sphere. 
We begin by discussing the $W$-triviality of certain familiar rational homology spheres.

\begin{Exm} Since $H^1(\mathbb RP^n;\mathbb Z_2)\neq 0$, the real projective spaces $\mathbb RP^n$ are not $W$-trivial (see Proposition\,\ref{firstprop} below).  
Let 
$$L_{n,p}=S^{2n-1}/\mathbb Z_p$$ 
$n>1$ and $p$ a prime, denote the Lens space obtained as a quotient of $S^{2n-1}$ by the diagonal action of $\mathbb Z_p$. Here we regard $\mathbb Z_p$ as the group of $p$-th roots of unity and $S^{2n-1}$ as the unit sphere in $\mathbb C^n$. 
The integral homology groups of $L_{n,p}$ are zero  in even degrees $i$, $1< i<2n-1$ and are equal to $\mathbb Z_p$ in odd degrees $i$, $1\leq i <2n-1$. Hence if $p$ odd, then the Lens space $L_{n,p}$ is $W$-trivial. 
\end{Exm}

\begin{Exm}
Let $V_2(\mathbb R^5)$ denote the Stiefel manifold of orthonormal $2$-frames in $\mathbb R^5$. $V_2(\mathbb R^5)$ is a rational homology $7$-sphere. The integral homology of $V_2(\mathbb R^5)$ is of the form as in Theorem\,\ref{seven} with $F=F'=0$ and $F'' = \mathbb Z_2$ and hence 
$V_2(\mathbb R^5)$ is not $W$-trivial. Another way to prove this is to consider the sphere bundle 
$$S^3 \hookrightarrow V_2(\mathbb R^5)\stackrel{p}\longrightarrow V_1(\mathbb R^5)=S^4.$$
A part of the Gysin sequence associated to the above sphere bundle is 
$$\rightarrow H^0(S^4;\mathbb Z_2)\longrightarrow H^4(S^4;\mathbb Z_2)\stackrel{p^*}\longrightarrow H^4(V_2(\mathbb R^5);\mathbb Z_2)\longrightarrow H^1(S^4;\mathbb Z_2) \rightarrow.$$
It is well known \cite{borel} that $H^4(V_2(\mathbb R^5);\mathbb Z_2)\cong \mathbb Z_2$. Hence $p^*$ is an isomorphism. Thus if $\alpha$ is the Hopf bundle over 
$S^4$, then $w_4(p^*(\alpha))= p^*(w_4(\alpha))\neq 0$ and hence $V_2(\mathbb R^5)$ is not $W$-trivial.

\end{Exm}


\begin{Exm}
 Examples of $W$-trivial manifolds in dimension $7$ are given by certain rational homology spheres constructed by Ruberman \cite[Example\,7, page 232]{ruberman}. In each dimension $n>5$ Ruberman has constructed examples $X_k$ of  simply connected rational homology  $n$-spheres with 
$$
H_i(X_k;\mathbb Z) =\left\{
\begin{array}{cl}
\mathbb Z & i=0,n\\
\mathbb Z_k & i=2,n-3\\
0 & \mbox{otherwise}.
\end{array}\right.
$$
In dimension $7$, these rational homology spheres $X_k$ have the homology as in Theorem\,\ref{seven} with $F=F''=0$, $F'=\mathbb Z_k$, and hence are $W$-trivial if $k$ is odd.
For a general $n> 5$, when $k$ is odd, it is clear that $X_k$ is a $\mathbb Z_2$-homology sphere. If in addition $n$ is not a power of $2$, then  $X_k$  is $W$-trivial. In each dimension $n\geq 5$ there exist, by a construction of Kervaire \cite[Theorem\,1, page 67]{kervaire}, a non simply connected integral homology sphere $K$. If the Ruberman manifold $X_k$ is $W$-trivial, then the connected sum $X_k\# K$ is $W$-trivial. This gives us examples of non simply connected $W$-trivial manifolds. 
\end{Exm}

\begin{Exm}
In dimension $5$, there exist simply connected manifolds $B_k$  (see \cite{barden}) whose integral homology is of the form 
$$
H_i(B_k;\mathbb Z)=\left\{\begin{array}{ccl}
\mathbb Z & i=0,5\\
\mathbb Z_k\oplus \mathbb Z_k &i=2\\
0 & \mbox{otherwise}.
\end{array}\right.
$$
If $k$ is odd, then $B_k$ is $W$-trivial. One may now take connected sum with the integral homology spheres of Kervaire to obtain a non  simply connected $W$-trivial $5$-manifold. 
\end{Exm}

This note is organized as follows. In section 2 we collect some preliminary facts about $W$-trivial manifolds and then prove the main theorems. Throughout, we follow the conventions stated below.

{\em Conventions.} In this note we follow the conventions of \cite{ani} which we recall briefly.  The notations  $F,F',F'',\ldots $ will denote finite abelian groups. Given $F$ (resp. $F',F''\ldots$), the integer $s$ (resp. $s',s'',\ldots$) will denote the number of primes $p_i$ that are equal to $2$ in a direct sum decomposition 
$$F=\oplus_i\mathbb Z_{p_i^{k_i}}$$
of $F$ (resp. $F',F'',\ldots$) with $p_i$ not necessarily distinct. Thus we have 
$$\mathrm{Ext}(F,\mathbb Z)=F$$
and 
$$\mathrm{Ext}(F,\mathbb Z_2)=\mathbb Z_2^s=\mathrm{Hom}(F,\mathbb Z_2).$$
Given $s$ (resp. $s',s'',\ldots$) the integers $u,v$ (resp. $u',v'; u'',v'';\ldots$ ) will denote the number of primes $p_i$ such that $k_i$ is equal to $1$ and the number of primes $p_i$ such that $k_i>1$ respectively. Thus $s=u+v$. In particular, 
$$\mathrm{Ext}(F,\mathbb Z_4)= \mathbb Z_2^u\oplus \mathbb Z_4^v.$$
Finally, all vector bundles that we consider are real unless stated otherwise.


\section{Proofs}


In this section we prove the main theorems. We begin by fixing some notations and proving some preliminary results that we shall use. 
We shall denote by $\rho_k$ the mod-$k$ reduction homomorphism 
$$\rho_k: H^i(X;\mathbb Z)\longrightarrow H^i(X;\mathbb Z_k)$$
and by  
$$\rho_{4,2}:H^4(X;\mathbb Z_4)\longrightarrow H^4(X;\mathbb Z_2)$$
the homomorphism induced by the coefficient surjection.

We shall denote by $\mathfrak{P}$
the Pontryagin square (see \cite{thomas1})
$$\mathfrak{P} : H^{2k}(X;\mathbb Z_2)\longrightarrow H^{4k}(X;\mathbb Z_4).$$
The Pontryagin square  satisfies 
$$\rho_{4,2}\circ\mathfrak{P}(x)=x^2$$
for every $x\in H^{2k}(X;\mathbb Z_2)$. 

We begin with some observations that we shall use in the sequel. Recall that given a finite abelian group $F=\oplus_i\mathbb Z_{p_i^{k_i}}$, the integer $s$ denotes the number of primes $p_i$ that are equal to $2$. 

\begin{Rem}
Let $F$ be a finite abelian group and 
$$\beta:\mathbb Z_2^{\ell}\oplus \mathbb Z_2^{s} \longrightarrow F$$
be a monomorphism. Then $\ell =0$. 
This is because the number of elements of order $2$ in $ \mathbb Z_2^{\ell}\oplus \mathbb Z_2^s$ and $F$ are $2^{\ell+s}-1$ and $2^s-1$ respectively. 
Since $2^{\ell+s}-1\leq 2^s-1$, it follows that $\ell=0$.
\end{Rem}

\begin{Prop}\label{firstprop} Let $X$ be a closed  smooth $n$-manifold. 
If $X$ is $W$-trivial, then 
\begin{enumerate}
\item $X$ is orientable,
\item $X$ is unoriented nullbordant, 
\item $H^1(X;\mathbb Z_2)=0$, 
\item $H_1(X;\mathbb Z)$ is finite and does not have elements of order $2$, 
\item $\rho_2:H^2(X;\mathbb Z)\longrightarrow H^2(X;\mathbb Z_2)$ is the zero homomorphism, 
\item $H_2(X;\mathbb Z)$ is finite, and 
\item $Sq^k:H^{n-k}(X;\mathbb Z_2)\longrightarrow H^n(X;\mathbb Z_2)$ is the zero homomorphism for all $k\geq 1$. 
\end{enumerate}
\end{Prop}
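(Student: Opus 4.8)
The plan is to exploit the defining feature of $W$-triviality: since \emph{every} real vector bundle on $X$ has trivial total Stiefel--Whitney class, I can feed in well-chosen bundles and read off cohomological constraints. The first two items come from applying this to the tangent bundle $TX$. Orientability (1) is immediate, since $w_1(TX)=0$. For (2), recall that the unoriented bordism class of a closed manifold is detected by its Stiefel--Whitney numbers (Thom); as $w(TX)=1$ forces every number $\langle w_{i_1}(TX)\cdots w_{i_r}(TX),[X]\rangle$ to vanish, $X$ bounds.

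For (3) I would use that real line bundles over $X$ are classified by $H^1(X;\mathbb{Z}_2)$, the bijection being $\alpha\mapsto w_1(\alpha)$. Hence every class in $H^1(X;\mathbb{Z}_2)$ is the first Stiefel--Whitney class of some line bundle, and $W$-triviality forces all of these to vanish, so $H^1(X;\mathbb{Z}_2)=0$. Item (4) then follows from the universal coefficient theorem: connectivity gives $H^1(X;\mathbb{Z}_2)\cong\mathrm{Hom}(H_1(X;\mathbb{Z}),\mathbb{Z}_2)$, and the vanishing of the right-hand side forces the finitely generated group $H_1(X;\mathbb{Z})$ to have neither a free part nor $2$-torsion.

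Items (5) and (6) I would treat together using complex line bundles, which are classified by $H^2(X;\mathbb{Z})$ via $c_1$. For a complex line bundle $L$, the underlying real bundle satisfies $w_2(L_{\mathbb{R}})=\rho_2(c_1(L))$, so $W$-triviality together with the surjectivity of $c_1$ gives $\rho_2=0$ on $H^2(X;\mathbb{Z})$, which is (5). For (6), the coefficient sequence $0\to\mathbb{Z}\xrightarrow{\times 2}\mathbb{Z}\to\mathbb{Z}_2\to 0$ identifies $\ker\rho_2$ with $2\,H^2(X;\mathbb{Z})$; thus $\rho_2=0$ means $H^2(X;\mathbb{Z})$ is $2$-divisible, hence finite, and the universal coefficient theorem then forces the free rank of $H_2(X;\mathbb{Z})$ to vanish, so $H_2(X;\mathbb{Z})$ is finite.

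The substantive step is (7), where I would invoke the theory of Wu classes. The total Wu class $v=1+v_1+v_2+\cdots$ is characterized by $\langle Sq^k(x),[X]\rangle=\langle v_k\cup x,[X]\rangle$ for $x\in H^{n-k}(X;\mathbb{Z}_2)$, and Wu's formula gives $w(TX)=Sq(v)$. Since $W$-triviality yields $w(TX)=1$, comparing the total Steenrod square degree by degree and inducting (using that $Sq^i$ vanishes on classes of degree below $i$) forces $v_k=0$ for all $k\geq 1$. Poincar\'e duality then upgrades the defining relation to the equality $Sq^k(x)=v_k\cup x$ in $H^n(X;\mathbb{Z}_2)$ for $x\in H^{n-k}(X;\mathbb{Z}_2)$, so $v_k=0$ gives $Sq^k=0$ on $H^{n-k}(X;\mathbb{Z}_2)$ for every $k\geq 1$. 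The only point requiring genuine care is the inductive inversion of $Sq(v)=1$ to conclude $v=1$; everything else is a direct application of the classification of line bundles and the universal coefficient theorem.
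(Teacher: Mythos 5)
Your proposal is correct, and for items (1)--(5) and (7) it follows the paper's own argument essentially step for step: the tangent bundle gives (1) and (2) via Thom's theorem, the classification of real and complex line bundles by $w_1$ and $c_1$ gives (3)--(5), and the vanishing of the Wu classes gives (7). (On (7) you actually supply a detail the paper only asserts, namely the inductive inversion of $Sq(v)=1$ to get $v=1$; the paper states flatly that $W$-triviality makes all $v_k$, $k\geq 1$, vanish.)

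The genuine divergence is item (6), where your argument is different from, and cleaner than, the paper's. The paper first excludes dimensions $1,2,4,8$ by appealing to the fact that no $W$-trivial manifolds exist there, handles dimension $3$ separately, and then for dimension $>4$ writes $H_2(X;\mathbb{Z})=\mathbb{Z}^{\ell}\oplus F'$ and proves $\ell=0$ by noting that the Bockstein $\beta\colon H^2(X;\mathbb{Z}_2)\to H^3(X;\mathbb{Z})$ is injective (its kernel is the image of $\rho_2$, which is zero by (5)), that its image lies in the $2$-torsion of $H^3(X;\mathbb{Z})$, and then invoking a counting argument on elements of order $2$ in a finite abelian group. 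Your route instead reads the vanishing of $\rho_2$ at the other spot of the same exact sequence: exactness of $H^2(X;\mathbb{Z})\xrightarrow{\times 2}H^2(X;\mathbb{Z})\xrightarrow{\rho_2}H^2(X;\mathbb{Z}_2)$ shows $H^2(X;\mathbb{Z})$ is $2$-divisible, a finitely generated $2$-divisible group is finite, and the universal coefficient theorem then kills the free rank of $H_2(X;\mathbb{Z})$. This is uniform in the dimension, needs no case analysis, and does not rely on the nonexistence of $W$-trivial manifolds in dimensions $1,2,4,8$; what the paper's approach buys in exchange is slightly more explicit information (the identification of the $2$-torsion constraints on the low-degree homology) that it reuses later in the proofs of the main theorems.
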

\begin{proof}
If $TX$ is the tangent bundle of $X$, then as $w_1(TX)=0$, we must have that $X$ is orientable proving (1). 
To prove (2) we note that as $w(TX)=1$ all Stiefel-Whitney numbers of $X$ must be zero. Hence by a deep theorem of Thom 
 \cite[Theorem 4.10, page 53]{milnor} $X$ is unoriented nullbordant.
Since for every $x\in H^1(X;\mathbb Z_2)$ there exists a line bundle $\alpha$ with $w_1(\alpha)=x$ we must have that $H^1(X;\mathbb Z_2)=0$ proving (3). That $H_1(X;\mathbb Z)$ is finite and does not contain elements of order two follows from (3). Next, if 
$$\rho_2:H^2(X;\mathbb Z)\longrightarrow H^2(X;\mathbb Z_2)$$
is non-zero we choose $x\in H^2(X;\mathbb Z)$, $x\neq 0$ such that $\rho_2(x)\neq 0$. If $\alpha$ is a complex line bundle with with $c_1(\alpha)=x$, then 
$$w_2(\alpha_{\mathbb R})=\rho_2(c_1(\alpha))=\rho_2(x)\neq 0.$$
Here $\alpha_{\mathbb R}$ denotes the underlying real bundle of $\alpha$. This contradiction proves (5). To prove (6), we assume that the dimension of $X$ is not equal to $1,2,4,8$ (see Remark\,\ref{rem} above). If dimension of $X$ is $3$, then $H_2(X;\mathbb Z)=0$ and we are done. So assuming that dimension of $X$ is greater than $4$, we have 
$$
H_i(X;\mathbb Z)=\left\{\begin{array}{ccl}
\mathbb Z & i=0\\
F & i=1\\
\mathbb Z^{\ell}\oplus F' & i=2\\
\mathbb Z^t\oplus F'' & i=3
\end{array}\right.
$$
where $F$ has no elements of order $2$. Since $\rho_2:H^2(X;\mathbb Z)\longrightarrow H^2(X;\mathbb Z_2)$ is the zero homomorphism 
and $F$ has no elements of order $2$, we have that 
$$\beta : H^2(X;\mathbb Z_2)\cong \mathbb Z_2^{\ell} \oplus \mathbb Z_2^{s'}\longrightarrow H^3(X;\mathbb Z)\cong \mathbb Z^t\oplus F'$$
is a monomorphism where $\beta$ is the Bockstein associated to 
$$0\longrightarrow \mathbb Z\stackrel{\times 2}\longrightarrow \mathbb Z\longrightarrow \mathbb Z_2\longrightarrow 0.$$
This forces $\ell=0$  and completes the proof of (6). Finally, as $X$ is $W$-trivial the Wu classes $v_k$, $k\geq 1$,  of $X$ are all zero. As 
$$Sq^k x= x\smile v_k=0$$
for all $x\in H^{n-k}(X;\mathbb Z_2)$ we have that (7) is true. This completes the proof of the proposition. 
\end{proof}

\begin{Prop}\label{secondprop}
Let $X$ be an orientable closed  smooth manifold. Then $X$ is $W$-trivial if any one of the following conditions is satisfied.  
\begin{enumerate}
\item $\widetilde{KO}(X)=0$,
\item $X$ is a $\mathbb Z_2$-homology $n$-sphere and $n$ is not a power of $2$.
\end{enumerate}
\end{Prop}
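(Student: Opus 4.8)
The plan is to handle the two sufficient conditions separately, each of which reduces to a standard structural property of Stiefel-Whitney classes.

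For (1), I would exploit the fact that the total Stiefel-Whitney class is a \emph{stable} invariant. For any real bundle $\alpha$ over $X$ and any trivial bundle $\epsilon^k$, the Whitney sum formula gives $w(\alpha\oplus\epsilon^k)=w(\alpha)\,w(\epsilon^k)=w(\alpha)$, since $w(\epsilon^k)=1$. Consequently $\alpha\mapsto w(\alpha)$ is constant on stable isomorphism classes and sends trivial bundles to $1$, so it defines a homomorphism from the additive group $\widetilde{KO}(X)$ into the group of units of $H^*(X;\mathbb Z_2)$ under cup product. If $\widetilde{KO}(X)=0$, this homomorphism is trivial; equivalently, every real bundle $\alpha$ is stably trivial, so $\alpha\oplus\epsilon^k\cong\epsilon^m$ for some $k,m$ and hence $w(\alpha)=w(\epsilon^m)=1$. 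Thus $X$ is $W$-trivial.

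For (2), I would invoke the fact recalled in Remark \ref{rem}(2) (due to Milnor): for any real bundle $\alpha$, the smallest positive integer $i$ with $w_i(\alpha)\neq 0$ is a power of $2$. Let $\alpha$ be an arbitrary real bundle over the $\mathbb Z_2$-homology $n$-sphere $X$. Since $H^i(X;\mathbb Z_2)\cong H^i(S^n;\mathbb Z_2)$ vanishes for $0<i<n$, we automatically have $w_i(\alpha)=0$ for all $0<i<n$, so the only class that could be nonzero is $w_n(\alpha)\in H^n(X;\mathbb Z_2)\cong\mathbb Z_2$. Were $w_n(\alpha)$ nonzero, it would be the first nonvanishing Stiefel-Whitney class of $\alpha$, forcing $n$ to be a power of $2$, contrary to hypothesis. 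Hence $w_n(\alpha)=0$ as well, so $w(\alpha)=1$ and $X$ is $W$-trivial.

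Both statements reduce to well-known facts, so the real content lies in pinning down the correct standard input rather than in any intricate argument; neither half is expected to be a serious obstacle. The only point in (1) requiring a little care is the justification that $w$ factors through $\widetilde{KO}(X)$, i.e. that it is multiplicative under Whitney sum and normalized to send trivial bundles to $1$; once stability is in place the conclusion is immediate. In (2) the sole subtlety is observing that the hypothesis on $n$, combined with the concentration of $\mathbb Z_2$-cohomology in degrees $0$ and $n$, leaves no admissible degree for a nonzero Stiefel-Whitney class.
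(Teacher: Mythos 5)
Your proposal is correct and follows essentially the same route as the paper: part (1) via stability of Stiefel--Whitney classes under Whitney sum with trivial bundles (so $\widetilde{KO}(X)=0$ forces $w(\alpha)=w(\epsilon^m)=1$), and part (2) via Milnor's fact that the first nonvanishing Stiefel--Whitney class occurs in a degree that is a power of $2$, combined with the concentration of the $\mathbb Z_2$-cohomology of a $\mathbb Z_2$-homology $n$-sphere in degrees $0$ and $n$. The only cosmetic difference is your remark that $w$ descends to a homomorphism on $\widetilde{KO}(X)$, which the paper does not spell out but which is an immediate consequence of the same stability observation.
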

\begin{proof}
Recall that $ \widetilde{KO}(X)$ is the abelian group of stable isomorphism classes of real vector bundles over $X$. Two vector bundles $\alpha, \beta$ over $X$ are stably equivalent if there exist trivial bundles $\epsilon, \epsilon'$ (possibly of different ranks) such that 
$$\alpha \oplus \epsilon \cong \beta\oplus \epsilon'.$$
By the Whitney product theorem we have that $w_i(\alpha)=w_i(\alpha\oplus \epsilon)$ for all $i\geq 0$. Thus stably isomorphic vector bundles have the same Stiefel-Whitney classes. This completes the proof of (1) as $\widetilde{KO}(X)=0$ implies that every vector bundle on $X$ is stably trivial. 
The proof of (2) has been outlined in Remark\,\ref{rem} (2). This completes the proof of the proposition. 
\end{proof}

\begin{Rem} We mention two remarks related to statements in the Proposition\,\ref{firstprop}-\ref{secondprop}.  
 \begin{enumerate}
\item  As noted after Theorem\,\ref{second}, $S^3\times S^3$ is $W$-trivial. This also follows from 
Proposition\,\ref{secondprop} since $\widetilde{KO}(S^3\times S^3)=0$. 
\item The converse of Proposition\,\ref{firstprop} (2) is not true. Examples are provided by the odd dimensional real projective spaces. 
\end{enumerate}
\end{Rem}

In dimension $6$, a $W$-trivial manifold can be a product as we have seen. We note that this is not possible in dimensions $3,5$ and $7$.

\begin{Prop}
Let $X$ be a non-empty $W$-trivial closed smooth manifold of dimension $3,5,$ or $7$. Then $X$ is not a product of manifolds of positive dimension. \qed
\end{Prop}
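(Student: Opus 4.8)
The plan is to combine the Künneth theorem with the fact, recorded in Theorems \ref{main} and \ref{seven}, that a $W$-trivial manifold in each of the dimensions $3,5,7$ is a rational homology sphere. Suppose, for contradiction, that $X = M \times N$ where $M$ and $N$ are closed smooth manifolds with $a := \dim M \geq 1$ and $b := \dim N \geq 1$. Since $X$ is connected, both factors are connected, and $a + b = n \in \{3,5,7\}$. The idea is that a nontrivial product necessarily has a nonzero rational homology group in a degree strictly between $0$ and $n$, which is incompatible with being a rational homology $n$-sphere.

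First I would observe that both factors are orientable. By Proposition\,\ref{firstprop}(1) a $W$-trivial manifold is orientable, so $w_1(TX)=0$. Writing $p_1,p_2$ for the two projections, we have $TX \cong p_1^* TM \oplus p_2^* TN$, hence $w_1(TX) = p_1^* w_1(TM) + p_2^* w_1(TN)$; under the Künneth isomorphism $H^1(X;\mathbb{Z}_2) \cong H^1(M;\mathbb{Z}_2) \oplus H^1(N;\mathbb{Z}_2)$ these two summands lie in complementary factors, so the vanishing of $w_1(TX)$ forces $w_1(TM)=0$ and $w_1(TN)=0$. Thus $M$ and $N$ are orientable closed connected manifolds.

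Next I would isolate a middle-dimensional rational homology group of $X$. By Theorems \ref{main} and \ref{seven}, $X$ is a rational homology $n$-sphere, so $H_i(X;\mathbb{Q}) = 0$ for $0 < i < n$; in particular $H_a(X;\mathbb{Q}) = 0$ because $0 < a < n$. On the other hand, since $M$ is orientable, connected and closed, Poincaré duality gives $H_a(M;\mathbb{Q}) \cong H^0(M;\mathbb{Q}) \cong \mathbb{Q}$, and $H_0(N;\mathbb{Q}) \cong \mathbb{Q}$ since $N$ is connected. The Künneth theorem over the field $\mathbb{Q}$ then gives
$$
H_a(X;\mathbb{Q}) \cong \bigoplus_{i+j=a} H_i(M;\mathbb{Q}) \otimes_{\mathbb{Q}} H_j(N;\mathbb{Q}),
$$
which contains the summand $H_a(M;\mathbb{Q}) \otimes_{\mathbb{Q}} H_0(N;\mathbb{Q}) \cong \mathbb{Q} \neq 0$. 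This contradicts $H_a(X;\mathbb{Q}) = 0$, completing the argument.

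The proof is essentially a dimension count, and I do not expect a serious obstacle. The only points requiring care are the reduction to the rational homology sphere condition (which is exactly what Theorems \ref{main} and \ref{seven} supply) and the observation that orientability of the product forces each factor to be orientable, which is what guarantees that the top rational homology class of $M$ survives to give a nonzero element of $H_a(X;\mathbb{Q})$ in a degree strictly between $0$ and $n$. One could instead route dimensions $3$ and $5$ through the sharper $\mathbb{Z}_2$-homology sphere conclusion of Theorem \ref{main} via the Künneth formula with $\mathbb{Z}_2$-coefficients, where orientability of the factors is automatic; but the rational argument above treats all three dimensions uniformly.
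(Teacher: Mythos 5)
Your proof is correct, but it takes a genuinely different route from the paper's. The paper argues directly from Milnor's theorem: if $X = X_1\times X_2$, then each factor is itself $W$-trivial, because $\pi_i^*$ is injective on mod-$2$ cohomology, so a bundle on $X_i$ with a nonzero Stiefel--Whitney class would pull back to one on $X$; then, since every way of writing $3$, $5$ or $7$ as a sum of two positive integers has a summand lying in $\{1,2,4\}$, one factor is an orientable closed manifold of dimension $1$, $2$ or $4$, contradicting Remark\,\ref{rem}\,(1). Your argument instead invokes the paper's main classification results (Theorems\,\ref{main} and \ref{seven}) to conclude that $X$ is a rational homology sphere, and then derives the contradiction from the K\"unneth theorem together with the observation that an orientable closed connected factor contributes its fundamental class to $H_*(X;\mathbb{Q})$ in a degree strictly between $0$ and $n$; your reduction of orientability of the factors from $w_1(TX)=0$ is also correct. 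Two caveats are worth recording. First, in dimensions $3$ and $5$ Theorem\,\ref{main} literally gives a $\mathbb{Z}_2$-homology sphere, so your claim that $X$ is a rational homology sphere uses the (true, but unstated) implication that finitely generated homology with vanishing mod-$2$ reduction is odd torsion; the paper asserts this implication in the introduction, so it is available. Second, the proposition occurs in the paper \emph{before} the proofs of Theorems\,\ref{main} and \ref{seven}, so your proof introduces a forward reference; it is not circular, however, since those proofs rest on Propositions\,\ref{firstprop}, \ref{secondprop} and the \v{C}adek--Van\v{z}ura theorems and never use this proposition. As for what each approach buys: the paper's proof is elementary and logically prior, needing only Milnor's theorem and the arithmetic accident that $3$, $5$ and $7$ admit no two-part partition avoiding $\{1,2,4\}$; yours is uniform across the three dimensions and exposes the homological reason a rational homology sphere cannot split as a product, at the cost of resting on the deepest results of the paper.
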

\begin{proof}
Suppose that $X$ splits as a product of positive dimensional manifolds
$$X=X_1\times X_2$$
with $X_i$  closed and smooth for $i=1,2$. As $X$ is $W$-trivial so is $X_i$ for $i=1,2$. This is because the projection 
$$\pi_i:X_1\times X_2\longrightarrow X_i$$
induces a monomorphism $\pi_i^*:H^* (X_i;\mathbb Z_2)\longrightarrow H^*(X_1\times X_2;\mathbb Z_2)$ for $i=1,2$. 

As the dimension of $X$ is $3,5$ or $7$ it follows that, in each case, one of $X_1$ or $X_2$ must have dimension equal to $2^j$ where $j=0,1,2$. 
But by Remark\,\ref{rem} (1) no manifold of dimension $1,2$ or $4$ is $W$-trivial. This contradiction proves the proposition. 
\end{proof}


 We mention that if $X,Y$ are $\mathbb Z_2$-homology spheres of dimensions $m,n$ respectively with none of $m,n,m+n$ a power of $2$, then the product $X\times Y$ is $W$-trivial. This is because the cohomology of $X\times Y$ is concentrated in degrees $0,m,n$ and $m+n$. Since none of $m,n,m+n$ are a power $2$, by Remark\,\ref{rem} (2), all the Stiefel-Whitney classes of $X\times Y$ in positive degrees are zero. 
 
 In general the product of two $W$-trivial manifolds is not $W$-trivial. For example, the spheres $S^3,S^5$ are $W$-trivial but the product $S^3\times S^5$ is not $W$-trivial 
 by Remark\,\ref{rem} (1). We however have the following observation about $W$-triviality of products. 

\begin{Prop}
Let $X,Y$ be two orientable closed smooth manifolds with $\widetilde{KO}(X)=0$ and $\widetilde{KO}(Y)=0$. If $X\wedge Y$ is $W$-trivial, then the product $X\times Y$ is $W$-trivial. 
\end{Prop}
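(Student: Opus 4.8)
The plan is to pass from genuine vector bundles to their stable classes and to exploit the cofibre sequence relating the product, the wedge and the smash. First I would recall that the total Stiefel--Whitney class is a natural homomorphism $w\colon KO(Z)\to H^*(Z;\mathbb Z_2)^\times$ (this is exactly the stability used in Proposition~\ref{secondprop}\,(1)): it factors through stable classes and sends trivial bundles to $1$, so for a rank-$r$ bundle $\alpha$ one has $w(\alpha)=w([\alpha]-r)$. Hence it suffices to show that \emph{every} element of $\widetilde{KO}(X\times Y)$ has trivial total Stiefel--Whitney class. Fixing basepoints in $X$ and $Y$ (connected closed smooth manifolds are compact, well-pointed CW complexes), the inclusion $i\colon X\vee Y\hookrightarrow X\times Y$ is a cofibration whose cofibre is the smash product, with quotient map $q\colon X\times Y\to X\wedge Y$.

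Next I would feed this cofibre sequence into the reduced theory $\widetilde{KO}$, obtaining the exact sequence
$$\widetilde{KO}(X\wedge Y)\xrightarrow{\;q^*\;}\widetilde{KO}(X\times Y)\xrightarrow{\;i^*\;}\widetilde{KO}(X\vee Y).$$
Since reduced $KO$ of a wedge of well-pointed spaces splits as a direct sum, the hypotheses $\widetilde{KO}(X)=0$ and $\widetilde{KO}(Y)=0$ give $\widetilde{KO}(X\vee Y)\cong\widetilde{KO}(X)\oplus\widetilde{KO}(Y)=0$. Therefore $i^*=0$, and by exactness $q^*$ is surjective: every stable class $\eta\in\widetilde{KO}(X\times Y)$ has the form $\eta=q^*(\xi)$ for some $\xi\in\widetilde{KO}(X\wedge Y)$. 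Note that I only need exactness at the middle term here, not the full stable splitting of $\widetilde{KO}(X\times Y)$, so injectivity of $q^*$ and the vanishing of the connecting map are irrelevant.

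Finally I would invoke naturality. Given a real vector bundle $\alpha$ of rank $r$ over $X\times Y$, write $[\alpha]-r=q^*(\xi)$ with $\xi\in\widetilde{KO}(X\wedge Y)$; this is legitimate because $X\wedge Y$ is a finite CW complex, so every element of $\widetilde{KO}(X\wedge Y)$ is a difference of genuine bundles. Then
$$w(\alpha)=w([\alpha]-r)=w(q^*\xi)=q^*\bigl(w(\xi)\bigr).$$
Since $X\wedge Y$ is $W$-trivial, every bundle over it, and hence every stable class $\xi$, has $w(\xi)=1$; thus $w(\alpha)=q^*(1)=1$. As $\alpha$ was arbitrary, $X\times Y$ is $W$-trivial. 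The only genuinely delicate point—the part I expect to be the main obstacle—is the first paragraph's bookkeeping: setting up the cofibre sequence in the \emph{reduced} theory with the correct basepoints, confirming the wedge splitting $\widetilde{KO}(X\vee Y)\cong\widetilde{KO}(X)\oplus\widetilde{KO}(Y)$, and checking that $w$ genuinely factors through stable classes and is natural for the non-manifold space $X\wedge Y$. Once exactness and the vanishing $\widetilde{KO}(X\vee Y)=0$ are in place, surjectivity of $q^*$ together with naturality of $w$ finishes the argument with no computation required.
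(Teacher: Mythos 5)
Your proof is correct and takes essentially the same route as the paper's: the cofibre sequence $X\vee Y\to X\times Y\to X\wedge Y$, the induced exact sequence in $\widetilde{KO}$, the vanishing of $\widetilde{KO}(X\vee Y)$ forcing surjectivity of the map induced by the quotient, and naturality of $w$ together with $W$-triviality of $X\wedge Y$. If anything, your stable-class bookkeeping (writing $[\alpha]-r=q^*(\xi)$ and using that $w$ descends to $\widetilde{KO}$) is more careful than the paper's, which loosely asserts a bundle isomorphism where only a stable equivalence is available.
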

\begin{proof}
We shall check that $w(\alpha)=1$ for every vector bundle $\alpha$ over $X\times Y$. 
The cofibre sequence 
$$X \vee Y \longrightarrow X\times Y\stackrel{p}\longrightarrow X \wedge Y$$
gives the exact sequence 
$$\widetilde{KO}(X\wedge Y)\stackrel{p^*}\longrightarrow \widetilde{KO}(X\times Y)\longrightarrow \widetilde{KO}(X\vee Y).$$
The last group is zero and hence $p^*$ is surjective. Given a vector bundle $\alpha$ over 
$X\times Y$, there exists a vector bundle $\beta$ over $X\wedge Y$ with $p^*(\alpha)\cong \beta$. Since $Y\wedge Y$ is $W$-trivial, we have $w(\beta)=1$. This forces 
$w(\alpha)=w(p^*(\beta))=1$. This completes the proof. 
\end{proof}

\begin{Cor}
Let $X$ be an orientable closed smooth manifold with $\widetilde{KO}(X)=0$. Let $n>8$ be an integer such that $n\equiv 3,5,6,7 \pmod{8}$. Then 
$S^n\times X$ is $W$-trivial.
\end{Cor}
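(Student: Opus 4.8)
The plan is to deduce this from the Proposition immediately preceding the Corollary, applied with $Y=S^n$. First I would record that $S^n$ is an orientable closed smooth manifold and that the reduced group $\widetilde{KO}(S^n)\cong KO_n$ vanishes \emph{precisely} for $n\equiv 3,5,6,7\pmod 8$; this is immediate from the Bott periodicity values $KO_0=\mathbb Z$, $KO_1=KO_2=\mathbb Z_2$, $KO_3=0$, $KO_4=\mathbb Z$, $KO_5=KO_6=KO_7=0$, repeating with period $8$. Thus under the hypotheses both $\widetilde{KO}(X)=0$ and $\widetilde{KO}(S^n)=0$ hold, and the entire statement reduces to showing that the smash product $S^n\wedge X=\Sigma^n X$ is $W$-trivial.

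To attack $\Sigma^n X$ I would first set up the suspension formalism. Since $\widetilde H^i(\Sigma^n X;\mathbb Z_2)\cong \widetilde H^{i-n}(X;\mathbb Z_2)$, the mod $2$ cohomology of $\Sigma^n X$ is concentrated in degrees $n<i\le n+\dim X$ and all cup products of positive-degree classes vanish. By the Whitney product formula $w(\beta)$ depends only on the stable class of $\beta$, and because products vanish the assignment $\beta\mapsto w(\beta)-1=\sum_{i\ge 1}w_i(\beta)$ is additive; hence each $w_i$ is a group homomorphism $\widetilde{KO}(\Sigma^n X)\to \widetilde H^{i-n}(X;\mathbb Z_2)$, and it suffices to show every such homomorphism is zero. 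Two boundary reductions are available. The lowest class $w_{n+1}$ lands in $\widetilde H^1(X;\mathbb Z_2)$, which is $0$ by Proposition~\ref{firstprop}(3) (applicable since $\widetilde{KO}(X)=0$ forces $X$ to be $W$-trivial by Proposition~\ref{secondprop}(1)). At the other end, the vanishing of the Wu classes of the $W$-trivial manifold $X$, i.e. that $Sq^k\colon H^{\dim X-k}(X;\mathbb Z_2)\to H^{\dim X}(X;\mathbb Z_2)$ is zero (Proposition~\ref{firstprop}(7)), combined with the suspension form of the Wu relation $Sq^k(w_j)=\binom{j-1}{k}\,w_{j+k}$ (only the product-free term survives on a suspension), annihilates the top class $w_{n+\dim X}$ whenever one of the coefficients $\binom{n+\dim X-k-1}{k}$ is odd. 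Finally, by Remark~\ref{rem}(2) the first nonzero Stiefel--Whitney class of $\beta$ must sit in a degree that is a power of $2$; as this degree exceeds $n>8$, it is at least $16$.

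The crux, and the step I expect to be the main obstacle, is to rule out a first nonzero class $w_{2^a}$ in an \emph{intermediate} degree $n<2^a\le n+\dim X$, where neither the bottom- nor the top-degree reduction applies. Here I would pass to the stable viewpoint: the additive transformations $w_i$ on suspensions assemble into stable mod $2$ cohomology operations out of $KO$-theory, classified by $H^*(KO;\mathbb Z_2)=\mathcal A/\!\!/\mathcal A(1)$. Milnor's theorem (Theorem~\ref{milnor}) says exactly that the component of such an operation detected on the bottom cell $S^{2^a}$ is zero once $2^a\ge 16$, so any surviving obstruction is carried by the higher cells of $\widetilde{KO}(\Sigma^n X)\cong \widetilde{KO}^{-n}(X)$. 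To eliminate these I would run the Atiyah--Hirzebruch spectral sequence for $\widetilde{KO}^{-n}(X)$ — whose low differentials are governed by $Sq^2$, the same datum defining $\mathcal A(1)$ — and exploit $\widetilde{KO}(X)=0$ together with the residue $n\equiv 3,5,6,7\pmod 8$ to show that no class of $\widetilde{KO}^{-n}(X)$ can support a nonzero intermediate Stiefel--Whitney class. Decoupling this genuine higher-cell contribution from the bottom-cell Milnor obstruction is the delicate point; by contrast the surrounding ingredients (Bott periodicity, additivity on the suspension, and the two boundary reductions) are routine.
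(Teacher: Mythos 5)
Your overall reduction is exactly the paper's: verify $\widetilde{KO}(S^n)=0$ for $n\equiv 3,5,6,7\pmod 8$ via Bott periodicity, then invoke the preceding Proposition to reduce everything to the $W$-triviality of $S^n\wedge X=\Sigma^n X$. The gap is in what you do next. The paper disposes of this step in one line by citing the theorem of Atiyah--Hirzebruch (\cite[Theorem\,1, page 223]{atiyah}), which states precisely that all Stiefel--Whitney classes of any real vector bundle over an $n$-fold suspension vanish once $n>8$; this is the entire content of the hypothesis $n>8$ in the Corollary. You instead attempt to reprove this fact from scratch, and your argument does not close: you yourself flag the intermediate-degree case (a first nonzero class $w_{2^a}$ with $n<2^a\le n+\dim X$) as "the main obstacle" and offer only a strategy --- stable operations out of $KO$-theory plus the Atiyah--Hirzebruch spectral sequence --- rather than a proof. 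Since every other part of your argument (the bottom-degree and top-degree reductions, the power-of-2 constraint) only shaves off boundary cases, the unproved intermediate case \emph{is} the theorem, so the proposal as written is incomplete.

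Two further problems with the sketch itself. First, the classification you appeal to, $H^*(KO;\mathbb Z_2)=\mathcal A/\!\!/\mathcal A(1)$, holds for the connective spectrum $ko$, not for periodic $KO$ (for the periodic spectrum the mod $2$ cohomology vanishes), and in any case Stiefel--Whitney classes are not stable operations --- on suspensions they become additive, but promoting them to stable operations classified by the cohomology of a spectrum needs an argument you have not supplied; this difficulty is essentially what Atiyah--Hirzebruch's proof (tracking Stiefel--Whitney classes through the Bott periodicity isomorphism) actually resolves. Second, your top-degree reduction is conditional: the suspension Wu relation $Sq^k(w_j)=\binom{j-1}{k}w_{j+k}$ kills $w_{n+\dim X}$ only when some coefficient $\binom{n+\dim X-k-1}{k}$ is odd, which is not guaranteed for all $n$ and $\dim X$. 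The fix is simply to replace the entire second half of your argument by the citation the paper uses: once you know $\Sigma^n X$ is $W$-trivial for $n>8$ by Atiyah--Hirzebruch, your first paragraph together with the preceding Proposition finishes the proof.
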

\begin{proof}
It is known that $\widetilde{KO}(S^n)=0$ for $n\equiv 3,5,6,7 \pmod{8}$ \cite{hus}. Since $n>8$, by \cite[Theorem\,1, page 223]{atiyah}, the wedge $S^n\wedge X= \Sigma^nX$ is $W$-trivial. Hence by the above proposition $S^n\times X$ is $W$-trivial. This completes the proof. 
\end{proof}

We now turn to the proof of the main theorems. 
We begin by proving Theorem\,\ref{main} in dimension $3$.

\begin{Prop}\label{abcd}
Let $X$ be an orientable closed smooth $3$-manifold. Then $X$ is $W$-trivial if and only if it is a $\mathbb Z_2$-homology $3$-sphere.
\end{Prop}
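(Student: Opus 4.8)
The plan is to prove both directions of the equivalence for an orientable closed smooth $3$-manifold $X$.

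First I would establish the harder direction: if $X$ is $W$-trivial, then $X$ is a $\mathbb{Z}_2$-homology $3$-sphere. By Poincar\'e duality and orientability, the integral homology of a closed orientable $3$-manifold is determined by $H_1(X;\mathbb{Z})$; we have $H_0(X;\mathbb{Z})=H_3(X;\mathbb{Z})=\mathbb{Z}$, $H_1(X;\mathbb{Z})$ some finitely generated abelian group, and $H_2(X;\mathbb{Z})\cong \mathbb{Z}^b \oplus (\text{free part dual to torsion})$, where $b$ is the first Betti number. Proposition\,\ref{firstprop}\,(4) already tells us that $W$-triviality forces $H_1(X;\mathbb{Z})$ to be finite with no element of order $2$. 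The finiteness kills the free rank, so $b=0$, and by duality $H_2(X;\mathbb{Z})=0$. To conclude a $\mathbb{Z}_2$-homology sphere I would feed the structure of $H_1(X;\mathbb{Z})$ through the universal coefficient theorem: since $H_1$ is finite with no $2$-torsion, both $\mathrm{Hom}(H_1,\mathbb{Z}_2)$ and $\mathrm{Ext}(H_0,\mathbb{Z}_2)$ vanish, giving $H^1(X;\mathbb{Z}_2)=0$ (also immediate from Proposition\,\ref{firstprop}\,(3)), and the absence of $2$-torsion in $H_1$ and $H_2$ forces all the mod-$2$ homology groups in degrees $1,2$ to vanish. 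Hence $H_i(X;\mathbb{Z}_2)\cong H_i(S^3;\mathbb{Z}_2)$ for all $i$.

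For the converse, suppose $X$ is a $\mathbb{Z}_2$-homology $3$-sphere. Since $3$ is not a power of $2$, I would invoke Remark\,\ref{rem}\,(2) (equivalently Proposition\,\ref{secondprop}\,(2)): for any real vector bundle $\alpha$, the smallest degree in which a nonzero Stiefel-Whitney class can occur is a power of $2$, and as $H^i(X;\mathbb{Z}_2)=0$ for $i=1,2$ and $H^4=0$ by dimension, there is no room for a nonzero $w_i(\alpha)$ in any positive degree. Thus $w(\alpha)=1$ for every $\alpha$ and $X$ is $W$-trivial.

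The main obstacle is the first direction, specifically converting the algebraic conclusion of Proposition\,\ref{firstprop}\,(4) into the full $\mathbb{Z}_2$-homology-sphere statement. The delicate points are the careful bookkeeping with Poincar\'e duality to pin down $H_2(X;\mathbb{Z})$ from $H_1(X;\mathbb{Z})$, and the universal coefficient argument showing that the no-$2$-torsion condition is exactly what is needed to make all the mod-$2$ homology vanish in the middle degrees; the remaining degrees are free by Milnor's power-of-$2$ observation. None of these steps is computationally heavy, so the argument should be short once the duality and universal-coefficient inputs are assembled.
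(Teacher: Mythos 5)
Your proposal is correct and follows essentially the same route as the paper: Proposition~\ref{firstprop}\,(4) gives $H_1(X;\mathbb Z)$ finite with no $2$-torsion, Poincar\'e duality together with the universal coefficient theorem then gives $H_2(X;\mathbb Z)\cong H^1(X;\mathbb Z)=0$ and the vanishing of the mod-$2$ homology in the middle degrees, and the converse is Proposition~\ref{secondprop}\,(2) since $3$ is not a power of $2$. The only blemish is your phrase ``$H_2(X;\mathbb Z)\cong\mathbb Z^b\oplus(\text{free part dual to torsion})$'' --- for a closed orientable $3$-manifold $H_2(X;\mathbb Z)\cong H^1(X;\mathbb Z)\cong\mathbb Z^{b_1}$ is free --- but this does not affect the argument, since you only use that finiteness of $H_1$ forces $H_2=0$.
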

\begin{proof}
We first assume that $X$ is $W$-trivial. By Proposition\,\ref{firstprop} (4) 
$$H_1(X;\mathbb Z)=F$$
is finite and does not contain any element of order $2$. It follows that $H^1(X;\mathbb Z)\cong H_2(X;\mathbb Z)=0$. Hence $X$ is a $\mathbb Z_2$-homology $3$-sphere. 
The converse follows from Proposition\,\ref{secondprop} (2). This completes the proof. 
\end{proof}

To prove Theorem\,\ref{main} in dimension $5$ we shall make use of a result due to \v{C}adek-Van\v{z}ura which we state below for easy reference. 

\begin{Thm} \cite[Theorem\,1, page 757]{cadek}\label{cadek} Let $X$ be a $CW$-complex of dimension $\leq 5$ and suppose 
$$\gamma : [X,BSO(5)]\longrightarrow H^2(X;\mathbb Z_2)\oplus H^4(X;\mathbb Z_2)\oplus H^4(X;\mathbb Z)$$
is defined by 
$$\gamma(\alpha)=(w_2(\alpha),w_4(\alpha),p_1(\alpha)).$$
Then $\mathrm{image}(\gamma)=\{(a,b,c)\,:\, \rho_4(c)=\mathfrak{P}(a)+i_*(b)\}$. 
Here $i_*:H^*(X;\mathbb Z_2)\longrightarrow H^*(X;\mathbb Z_4)$ is induced by 
the coefficient injection $\mathbb Z_2\longrightarrow \mathbb Z_4$. \qed
\end{Thm}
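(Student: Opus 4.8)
The plan is to prove the two inclusions separately: first that the classes of every bundle satisfy the displayed relation, so that $\mathrm{image}(\gamma)$ is contained in the stated set, and then that every triple obeying the relation is realised by an oriented rank-$5$ bundle. The algebraic engine throughout is the standard Pontryagin-square identity $\mathfrak{P}(\rho_2 x)=\rho_4(x^2)$ for integral $x$, together with the quadratic law $\mathfrak{P}(a+a')=\mathfrak{P}(a)+\mathfrak{P}(a')+i_*(aa')$.

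For the first inclusion I would establish the universal identity $\rho_4(p_1(\alpha))=\mathfrak{P}(w_2(\alpha))+i_*(w_4(\alpha))$ for every oriented real rank-$5$ bundle $\alpha$. Since $w_2,w_4,p_1$ are pulled back from $BSO(5)$, it suffices to verify this as an identity in $H^4(BSO(5);\mathbb{Z}_4)$, and by naturality it is enough to check it on test bundles detecting $H_4(BSO(5))$. Applying $\rho_{4,2}$ to both sides and using $\rho_{4,2}\mathfrak{P}(x)=x^2$, $\rho_{4,2}\circ i_*=0$ and $\rho_2(p_1)=w_2^2$ shows both sides agree mod $2$; hence their difference lies in $\mathrm{image}(i_*)$ and is pinned down by evaluation on integral $4$-cycles. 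I would evaluate on the real reduction $L_{\mathbb{R}}$ of a complex line bundle, where $w_4=0$, $p_1=c_1(L)^2$ and the identity becomes $\rho_4(c_1^2)=\mathfrak{P}(\rho_2 c_1)$, and on the tautological quaternionic line bundle over $S^4$, where $w_2=0$, $p_1=\pm 2u$ and $w_4=\rho_2 u\neq 0$, so the identity becomes $\rho_4(\pm 2u)=i_*(\rho_2 u)$. These two cases force the difference class to vanish.

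For the converse I would realise a prescribed triple $(a,b,c)$ satisfying the relation by obstruction theory along the Postnikov tower of $BSO(5)$, whose relevant homotopy groups are $\pi_2=\mathbb{Z}_2$ (carrying $w_2$), $\pi_3=0$, $\pi_4=\mathbb{Z}$ (carrying $p_1$) and $\pi_5=\mathbb{Z}_2$ (responsible for the part of $w_4$ not coming from $p_1$). Because $\dim X\leq 5$ one has $[X,BSO(5)]\cong[X,P_5(BSO(5))]$ and every obstruction above degree $5$ vanishes automatically, so only the first few $k$-invariants intervene. The construction proceeds stagewise: realise $w_2=a$ by a map to $K(\mathbb{Z}_2,2)$, lift across the $K(\mathbb{Z},4)$-fibration to prescribe $p_1=c$, and finally lift across the $K(\mathbb{Z}_2,5)$-stage to adjust $w_4=b$. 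Concretely, pullbacks of the generator of $\pi_4$ along maps $X\to S^4$ realise all triples $(0,\rho_2\tilde c,\pm 2\tilde c)$ with $\tilde c\in H^4(X;\mathbb{Z})$, real reductions of complex line bundles produce the $\mathfrak{P}(a)$ contribution, and Whitney sums combine them; here the cross term $w_2(\xi)w_2(\eta)$ in $w_4(\xi\oplus\eta)$ is matched exactly by the $i_*(aa')$ term in the quadratic law, so the relation is preserved under sums and the realisation is additive. Since $\mathrm{rank}=5=\dim X$, a stable class determines a genuine rank-$5$ bundle, so stabilising during the construction is harmless.

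The hardest step is the $w_4$-stage. Values of $w_4$ that are reductions of integral classes are supplied by the $S^4$-pullbacks above, but the relation also permits $b$ to be a nonzero Bockstein class $Sq^1 y$ (with $a=0$ and $c$ chosen so that $\rho_4 c=0$, whence $i_*b=0$), and realising such $b$ requires the unstable $\pi_5=\mathbb{Z}_2$ summand, which dies stably. The crux is therefore to compute the relevant $k$-invariants of $BSO(5)$ — the fourth, in $H^5(K(\mathbb{Z}_2,2);\mathbb{Z})$, tying $p_1$ to $\mathfrak{P}(w_2)$, and the fifth, controlling $w_4$ modulo reductions — and to show that the primary obstruction to the full lift equals $\rho_4(c)-\mathfrak{P}(a)-i_*(b)$, so that the single displayed relation is both necessary and sufficient. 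Verifying that the $w_4$-stage introduces no further independent condition, precisely because $\dim X\leq 5$ truncates the tower, is the delicate part of the argument.
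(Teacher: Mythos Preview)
The paper does not prove this theorem at all: it is quoted verbatim from \v{C}adek--Van\v{z}ura \cite{cadek} and closed with a \qed, then used as a black box in the proof of Proposition~\ref{wellwell}. There is therefore no argument in the paper to compare your proposal against.

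That said, your sketch is broadly in the spirit of the original \v{C}adek--Van\v{z}ura argument, which does proceed via the Postnikov tower of $BSO(5)$. Your treatment of the first inclusion (the universal Wu-type identity $\rho_4(p_1)=\mathfrak{P}(w_2)+i_*(w_4)$, verified on test bundles) is standard and correct. In the converse direction, however, there is a slip in how you assign roles to the Postnikov stages: the class $w_4$ lives in degree~$4$ and is already visible at the $P_4$ stage (the generator of $\pi_4(BSO(5))=\mathbb{Z}$, coming from the quaternionic Hopf bundle over $S^4$, has both $p_1$ and $w_4$ nonzero), so the $K(\mathbb{Z}_2,5)$-stage attached to $\pi_5$ does not ``adjust $w_4$'' as you write. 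What actually happens is that the single $\pi_4=\mathbb{Z}$ fibre carries a class whose integral pullback is (half of) $p_1$ and whose mod-$2$ reduction is $w_4$; the $k$-invariant in $H^5(K(\mathbb{Z}_2,2);\mathbb{Z})$ then ties the lifting problem to the Pontryagin square of $w_2$, and the displayed relation drops out. Your remark about realising $b=Sq^1 y$ via an ``unstable $\pi_5$ summand which dies stably'' is therefore misplaced: the issue is not stability but the precise form of the degree-$5$ $k$-invariant. If you want to reconstruct the proof rather than cite it, that $k$-invariant computation is the step to get right.
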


\begin{Prop}\label{wellwell}
Let $X$  be an orientable closed  smooth $5$-manifold.  Then $X$ is $W$-trivial if and only if it is a $\mathbb Z_2$-homology $5$-sphere. 
\end{Prop}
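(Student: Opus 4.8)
The plan is to prove the two implications separately, with the converse being immediate and the forward direction resting on the Čadek--Vanžura realization theorem. For the converse, if $X$ is a $\mathbb Z_2$-homology $5$-sphere, then since $5$ is not a power of $2$, Proposition\,\ref{secondprop}\,(2) gives that $X$ is $W$-trivial at once.

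For the forward direction, I would first pin down the integral homology of $X$ under the assumption of $W$-triviality. By Proposition\,\ref{firstprop}, $X$ is orientable with $H_1(X;\mathbb Z)=F$ finite and containing no element of order $2$, and $H_2(X;\mathbb Z)$ finite; write $H_2(X;\mathbb Z)=G$. Combining Poincaré duality with the universal coefficient theorem then forces $H_3(X;\mathbb Z)\cong F$ and $H_4(X;\mathbb Z)=0$. The goal reduces to showing that $G$ has no element of order $2$, since the remaining groups already contribute trivially to $\mathbb Z_2$-homology; in fact $H_2(X;\mathbb Z_2)\cong G\otimes\mathbb Z_2$ and $H_3(X;\mathbb Z_2)\cong\mathrm{Tor}(G,\mathbb Z_2)$, so equivalently I must show $H^2(X;\mathbb Z_2)=0$.

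The heart of the argument is the realization of classes in $H^2(X;\mathbb Z_2)$ as second Stiefel--Whitney classes. Applying Theorem\,\ref{cadek} to $X$, which has dimension $5$, an element $(a,b,c)$ of $H^2(X;\mathbb Z_2)\oplus H^4(X;\mathbb Z_2)\oplus H^4(X;\mathbb Z)$ is of the form $(w_2(\alpha),w_4(\alpha),p_1(\alpha))$ precisely when $\rho_4(c)=\mathfrak{P}(a)+i_*(b)$. The key observation is that the odd-torsion hypotheses above make the target of this constraint vanish: from $H_3(X;\mathbb Z)\cong F$ and $H_4(X;\mathbb Z)=0$ together with the universal coefficient theorem and the absence of $2$-torsion in $F$, one computes $H^4(X;\mathbb Z_4)=0$. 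Hence the Čadek--Vanžura constraint is vacuous, so $\gamma$ is surjective and in particular every $a\in H^2(X;\mathbb Z_2)$ is realized as $w_2(\alpha)$ for some oriented rank $5$ bundle $\alpha$. Since $X$ is $W$-trivial, every such $w_2(\alpha)$ vanishes, forcing $H^2(X;\mathbb Z_2)=0$; a further application of the universal coefficient theorem then shows $G$ has no element of order $2$, so $X$ is a $\mathbb Z_2$-homology sphere.

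I expect the main obstacle to be the careful bookkeeping of the coefficient groups rather than any deep input: one must correctly compute $H^4(X;\mathbb Z_4)$ and verify it is zero, and identify $H^2(X;\mathbb Z_2)$ from the integral homology, being attentive to the $\mathrm{Ext}$ and $\mathrm{Tor}$ contributions and to the order-$2$ restrictions recorded in Proposition\,\ref{firstprop}. Once these identifications are in place, Theorem\,\ref{cadek} does the real work by converting the vanishing of a coefficient group into the surjectivity of the map sending an oriented $5$-bundle to its class $w_2$.
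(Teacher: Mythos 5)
Your proposal is correct and follows essentially the same route as the paper: use Proposition\,\ref{firstprop} together with Poincar\'e duality and universal coefficients to pin the homology down to $\mathbb Z, F, G, F, 0, \mathbb Z$ with $F$ odd-torsion, observe that $H^4(X;\mathbb Z_4)=0$ makes the \v{C}adek--Van\v{z}ura condition in Theorem\,\ref{cadek} vacuous so that every class in $H^2(X;\mathbb Z_2)$ is a $w_2$, and conclude $H^2(X;\mathbb Z_2)=0$ by $W$-triviality, with the converse given by Proposition\,\ref{secondprop}\,(2). The only cosmetic difference is bookkeeping: you compute $H_3(X;\mathbb Z)\cong F$ directly from duality and $\mathrm{Ext}$, where the paper introduces $\mathbb Z^t\oplus F''$ and then shows $t=0$, $F''=F$.
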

\begin{proof} Assume that $X$ is $W$-trivial. We shall show that $X$ is a $\mathbb Z_2$-homology $5$-sphere. 
We first observe, using Proposition\,\ref{firstprop}, that the integral homology groups of $X$ are of the form 
$$
H_i(X;\mathbb Z)=\left\{ \begin{array}{cl}
\mathbb Z & i=0\\
F& i=1\\
 F' & i=2\\
\mathbb Z^t\oplus F'' & i= 3\\
\mathbb Z^{\ell'} & i=4\\
\mathbb Z & i=5
\end{array}\right.$$
where we recall that $F,F',F''$  are finite abelian groups with $F$ not containing any element of order $2$. 
Since $X$ is orientable we have, by Poincar\'{e} duality, that 
$$t=0,\,\,\,\,\ell'=0,\,\,\,\, F=F''.$$
Hence $F''$ does not contain any element of order $2$. 
Thus the mod-$2$ cohomology groups of $X$ are of the form
$$
H^i(X;\mathbb Z_2)=\left\{ \begin{array}{cl}
\mathbb Z_2 & i=0\\
0 & i=1\\
 \mathbb Z_2^{s'} & i=2\\
 \mathbb Z_2^{s'}& i= 3\\
0 & i=4\\
\mathbb Z _2& i=5.
\end{array}\right.$$
The proof that $X$ is a $\mathbb Z_2$-homology $5$-sphere will be complete if we can show that $s'=0$. 
Notice that as $\ell'=0$ and $F''=F$ do not contain any element of order $2$ we have that 
$$H^4(X;\mathbb Z_4)=0.$$
This implies that every tuple $(a,b,c)$ with $a\in H^2(X;\mathbb Z_2), b\in H^4(X;\mathbb Z_2), c\in H^4(X;\mathbb Z)$ is in the image of $\gamma$. Thus if $s'\neq 0$, then every 
non zero cohomology class $a\in H^2(X;\mathbb Z_2)$ equals $w_2(\alpha$) for some vector bundle $\alpha$ over $X$. This contradiction shows that  $s'=0$.  This completes the proof that $X$ is a $\mathbb Z_2$-homology $5$-sphere.

The converse follows from Proposition\,\ref{secondprop} (2). This together with Proposition\,\ref{abcd} completes the proof of Theorem\,\ref{main}. 
\end{proof}

We now turn to the proof of Theorem\,\ref{second}. We shall need the following theorem. 

\begin{Thm}\cite[Proposition\,2, page 729]{cadek1}\label{cadek1} Let $X$ be a connected $CW$-complex of dimension $\leq 7$ and suppose 
$$\gamma : [X,BSO(7)]\longrightarrow  H^2(X;\mathbb Z_2)\oplus H^4(X;\mathbb Z_2)\oplus H^4(X;\mathbb Z)$$
is defined by 
$$\gamma(\alpha)= (w_2(\alpha),w_4(\alpha), p_1(\alpha)).$$
Then $\mathrm{image}(\gamma)=\{(a,b,c)\,:\,\rho_4(c)=\mathfrak{P}(a)+i_*(b)\}$. Here $i_*:H^*(X;\mathbb Z_2)\longrightarrow H^*(X;\mathbb Z_4)$ is induced by 
the coefficient injection $\mathbb Z_2\longrightarrow \mathbb Z_4$. \qed
\end{Thm}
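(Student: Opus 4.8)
The plan is to read the statement as a computation of the image of the characteristic-class map $\Phi=(w_2,w_4,p_1)\colon BSO(7)\to E$, where $E=K(\mathbb Z_2,2)\times K(\mathbb Z_2,4)\times K(\mathbb Z,4)$, on the functor $[X,-]$ for complexes with $\dim X\le 7$. Since $w_2,w_4,p_1$ are stable classes pulled back from $BSO$ and all sit in degree $\le 4$, and since the stabilization map $[X,BSO(7)]\to[X,BSO]$ is bijective for $\dim X\le 6$ and surjective for $\dim X\le 7$, I would first transport the whole problem to the stable space $BSO$ and only destabilize to rank $7$ at the very end; this removes the Euler class in $H^7$, which does not affect $w_2,w_4,p_1$, from the discussion.

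Necessity (image $\subseteq$ the stated set) amounts to the universal identity $\rho_4(p_1)=\mathfrak P(w_2)+i_*(w_4)$ in $H^4(BSO;\mathbb Z_4)$. I would prove it using the defining property $\rho_{4,2}\circ\mathfrak P(x)=x^2$: applying $\rho_{4,2}$ to the proposed identity and using $\rho_{4,2}\circ\rho_4=\rho_2$ and $\rho_{4,2}\circ i_*=0$ turns it into the classical relation $\rho_2(p_1)=w_2^2$, so the two sides agree mod $2$. Hence their difference lies in $\ker\rho_{4,2}=i_*H^4(BSO;\mathbb Z_2)$ and is $2$-torsion; I pin it down to $0$ by evaluating on the universal examples, namely a complex line bundle (where $w_4=0$, $\mathfrak P(w_2)=\mathfrak P(\rho_2 c_1)=\rho_4(c_1^2)=\rho_4(p_1)$) and an oriented $3$-plane bundle (where $w_4=0$ and the identity is a direct computation on $BSO(3)$), together with additivity of $p_1$ and the Whitney formula. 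Once the universal identity holds, naturality gives the relation for every $\alpha$, so $\Phi$ factors through the homotopy fibre $E_0$ of $\psi\colon E\to K(\mathbb Z_4,4)$, $\psi(a,b,c)=\rho_4(c)-\mathfrak P(a)-i_*(b)$, and $[X,E_0]$ is exactly the constraint set.

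For sufficiency I would split off the top part of $p_1$. Given $(a,b,c)$ with $\rho_4(c)=\mathfrak P(a)+i_*(b)$, suppose first I can produce an oriented bundle $\nu$ with $w_2(\nu)=a$ and $w_4(\nu)=b$; this is consistent precisely because the relation forces $\mathfrak P(a)+i_*(b)=\rho_4(c)$ to lie in the image of $\rho_4$, which is the necessary integrality condition for such a $\nu$. By necessity, $\rho_4(p_1(\nu))=\mathfrak P(a)+i_*(b)=\rho_4(c)$, so $c-p_1(\nu)\in\ker\rho_4=4H^4(X;\mathbb Z)$, say $c-p_1(\nu)=4y$. I then correct $p_1$ with a spin bundle $\mu$: on $BSpin$ one has $p_1=2\lambda$ and $w_4=\rho_2(\lambda)$, where $\lambda$ generates $H^4(BSpin;\mathbb Z)\cong\mathbb Z$, and $\lambda\colon[X,BSpin]\to H^4(X;\mathbb Z)$ is bijective for $\dim X\le 7$ since $BSpin$ is $3$-connected with $\pi_4=\mathbb Z$ and the first further obstruction appears above degree $7$. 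Choosing $\lambda(\mu)=2y$ gives $w_2(\mu)=0$, $w_4(\mu)=\rho_2(2y)=0$, $p_1(\mu)=4y$, so $\nu\oplus\mu$ realizes $(a,b,c)$ exactly, and the stability surjection above represents it by a rank-$7$ bundle.

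This reduces sufficiency to the single assertion that remains the main obstacle: that a pair $(a,b)\in H^2(X;\mathbb Z_2)\oplus H^4(X;\mathbb Z_2)$ with $\mathfrak P(a)+i_*(b)\in\mathrm{im}\,\rho_4$ is realized by $(w_2,w_4)$ of an oriented bundle over a complex of dimension $\le 7$. I would attack it by lifting a representative $X\to K(\mathbb Z_2,2)\times K(\mathbb Z_2,4)$ through the relevant stage of the Postnikov tower of $BSO$, refined by its next $k$-invariants. The obstructions to such a lift lie in $H^{i+1}(X;\pi_i)$ for $i\le 7$ and are assembled from $w_3=Sq^1w_2$, $w_5$, $w_6$, the integral Bockstein, and the Pontryagin square; the crux is to show that in dimensions $\le 7$ each of these is either identically zero or can be annihilated by the indeterminacy coming from $\pi_4=\mathbb Z$ and the top homotopy groups, so that the single integrality relation $\mathfrak P(a)+i_*(b)\in\mathrm{im}\,\rho_4$ is the only surviving constraint. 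This modified Postnikov tower computation for $BSO(7)$ is precisely the content of \cite{cadek1}, and I expect it, rather than the formal reductions above, to be the technical heart of the argument.
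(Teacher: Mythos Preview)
The paper does not prove this statement at all: it is quoted verbatim from \v{C}adek--Van\v{z}ura \cite{cadek1} and marked with \qed, so there is no proof in the paper to compare your attempt against. The theorem is used as a black box in the proofs of Theorems~\ref{second} and~\ref{seven}.

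As for your outline on its own merits: the structure is sound, and the necessity direction via the universal identity $\rho_4(p_1)=\mathfrak{P}(w_2)+i_*(w_4)$ in $H^4(BSO;\mathbb Z_4)$, together with the $p_1$-correction by spin bundles (using that $[X,BSpin]\to H^4(X;\mathbb Z)$ is bijective in dimensions $\le 7$), is a clean reduction. But you yourself identify the crux correctly and then defer it: the assertion that any pair $(a,b)$ with $\mathfrak{P}(a)+i_*(b)\in\mathrm{im}\,\rho_4$ arises as $(w_2,w_4)$ of an oriented bundle over a complex of dimension $\le 7$ is exactly what requires the modified Postnikov tower analysis of $BSO$ through the $7$-skeleton, and that computation is the actual content of \cite{cadek1}. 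Your claim that ``this is consistent precisely because the relation forces $\mathfrak P(a)+i_*(b)$ to lie in the image of $\rho_4$, which is the necessary integrality condition for such a $\nu$'' only establishes necessity, not sufficiency; you acknowledge this in your final paragraph. So your proposal, like the paper, ultimately rests on \cite{cadek1} for the substantive step.
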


{\em Proof of Theorem\,\ref{second}}. 
We assume that $X$ is a $W$-trivial orientable closed smooth $6$-manifold. Then we shall show that the integral homology groups of $X$ are of the required form. To begin with, the integral homology groups of $X$ are of the form 
$$
H_i(X;\mathbb Z)=\left\{ \begin{array}{cl}
\mathbb Z & i=0\\
F& i=1\\
F' & i=2\\
\mathbb Z^t\oplus F'' & i= 3\\
\mathbb Z^{\ell'} \oplus F'''& i=4\\
\mathbb Z^{\ell''} & i=5\\
\mathbb Z & i=6
\end{array}\right.
$$
where we recall that $F,F',F''$ and $F'''$  are finite abelian groups with $F$ not containing any element of order $2$. 
Since $X$ is orientable, we have 
$$\ell''=0,\,\,\,\,F=F''',\,\,\,\,\ell'=0,\,\,\,F'=F''.$$
Thus $F'''$ does not contain any element of order $2$. 
The proof will be complete if we can show that $s'=0$. We assume that $s'>0$. This leads to a contradiction as follows. First observe that 
$$H^4(X;\mathbb Z_4)\cong \mathbb Z_2^{u'}\oplus\mathbb Z_4^{v'}.$$
Identifying the cohomology groups in the long exact sequence 
$$\longrightarrow H^4(X;\mathbb Z)\longrightarrow H^4(X;\mathbb Z)\stackrel{\rho_4}\longrightarrow H^4(X;\mathbb Z_4)\stackrel{\beta}\longrightarrow H^5(X;\mathbb Z)$$
associated to the short exact sequence of coefficient groups 
$$0\longrightarrow \mathbb Z\stackrel{\times 4}\longrightarrow \mathbb Z\longrightarrow \mathbb Z_4\longrightarrow 0$$
we get the exact sequence 
$$\cdots \longrightarrow F'\longrightarrow F'\stackrel{\rho_4}\longrightarrow \mathbb Z_2^{u'}\oplus\mathbb Z_4^{v'}\stackrel{\beta}\longrightarrow F\longrightarrow\cdots.$$
Since $F$ has no elements of order $2$, we have that $F$ has no elements of order $4$. This implies that the homomorphism $\beta$ is the zero homomorphism and hence 
$$\rho_4:H^4(X;\mathbb Z)\longrightarrow H^4(X;\mathbb Z_4)$$
is an epimorphism.  Next observe that as $s'\neq 0$ we have 
$$H^2(X;\mathbb Z_2)\cong \mathbb Z_2^{s'}\neq 0.$$
Let $a\in H^2(X;\mathbb Z_2)$ with $a\neq 0$. Then $\mathfrak{P}(a) \in H^4(X;\mathbb Z_4)$. As $\rho_4$ is an epimorphism, we choose $c\in H^4(X;\mathbb Z)$ 
such that 
$$\rho_4(c)=\mathfrak{P}(a).$$
Then, the $3$-tuple $(a,0,c)$ is in the image of $\gamma$ where $\gamma$ is the function defined in Theorem\,\ref{cadek1}. Thus  
there exists a vector bundle $\alpha$ over $X$ with $w_2(\alpha)=a \neq 0$. This contradiction implies that $s'=0$ and completes the proof that the 
integral homology of $X$ has the required form. 

Conversely suppose that the integral homology of $X$ has the form in the theorem. Then the mod-$2$ cohomology groups satisfy 
$H^i(X;\mathbb Z_2)=0$ for $i=1,2,4$. This clearly forces $X$ to be $W$-trivial. This completes the proof of Theorem\,\ref{second}. \qed

Next, we turn our attention to $7$-manifolds. We also recall the following theorem which we shall use. 

\begin{Thm}\cite[Theorem\,3, page 733]{cadek1} \label{cad2} Let $X$ be a $CW$ complex of dimension $\leq 7$ and let $P\in H^4(X;\mathbb Z)$, 
$W\in H^4(X;\mathbb Z_2)$. Then there exists an oriented rank $5$ bundle $\alpha$ over $X$ with 
$$w_2(\alpha)=0,\,\,\,\,\,w_4(\alpha)=W,\,\,\,\,\,p_1(\alpha)=P$$
if and only if there is $U\in H^4(X;\mathbb Z)$ such that 
\begin{enumerate}
\item $P=2U$, $\rho_2(U)=W$,
\item $Sq^2(W)=0$, and 
\item $0\in \Phi(U)$
\end{enumerate}
where $\Phi$ is the secondary cohomology operation from $H^4(X;\mathbb Z)$ into $H^7(X;\mathbb Z_2)$ associated to the relation $Sq^2\circ Sq^2\rho_2=0$. \qed
\end{Thm}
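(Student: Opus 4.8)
The plan is to read this statement as an obstruction‑theory computation along the low‑dimensional Postnikov tower of $BSpin(5)$. An oriented rank $5$ bundle $\alpha$ with $w_2(\alpha)=0$ is the same datum as a lift of its classifying map $X\to BSO(5)$ to $BSpin(5)$, and since $Spin(5)\cong Sp(2)$ the target is $BSp(2)$. From the fibration $S^3\to Sp(2)\to S^7$ one reads off that $BSp(2)$ is $3$‑connected with $\pi_4=\mathbb Z$, $\pi_5=\mathbb Z_2$, $\pi_6=\mathbb Z_2$ and $\pi_7=0$. The whole equivalence should fall out of climbing the first three stages of this tower against a complex $X$ of dimension $\le 7$.

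First I would fix the characteristic‑class dictionary. One has $H^4(BSpin(5);\mathbb Z)\cong\mathbb Z$, generated by the half‑Pontryagin class $U$ of a spin bundle, with $p_1=2U$ and $\rho_2(U)=w_4$. Hence any $\alpha$ realizing $(0,W,P)$ forces $P=2U$ and $\rho_2(U)=W$ with $U=p_1(\alpha)/2$, giving the necessity of condition (1). For condition (2), the Wu formula gives $Sq^2 w_4=w_2 w_4+w_6$; with $w_2=0$ and $w_6=0$ (rank $5$) this yields $Sq^2 W=Sq^2 w_4=0$, so (2) is necessary as well.

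For the existence half I would run obstruction theory. A choice of integral $U\in H^4(X;\mathbb Z)$ with $2U=P$ and $\rho_2(U)=W$ is exactly a map $X\to K(\mathbb Z,4)$, i.e. a lift to the bottom stage $P_4=K(\mathbb Z,4)$. The first $k$‑invariant of $BSpin(5)$ lives in $H^6(K(\mathbb Z,4);\mathbb Z_2)\cong\mathbb Z_2\langle Sq^2\iota_4\rangle$ and equals the nonzero class $Sq^2\iota_4$ (it cannot vanish, since $H^5(BSpin(5);\mathbb Z_2)=0$ shows the $\pi_5$‑class is not split off). Pulling back, the primary obstruction to lifting $U$ to the stage $P_5$ is $Sq^2\rho_2(U)=Sq^2 W\in H^6(X;\mathbb Z_2)$, which vanishes precisely by condition (2). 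After choosing such a lift, the next $k$‑invariant lies in $H^7(P_5;\mathbb Z_2)$ and restricts on the fibre $K(\mathbb Z_2,5)$ to $Sq^2\iota_5\in H^7(K(\mathbb Z_2,5);\mathbb Z_2)$; the resulting obstruction to lifting to $P_6$ is therefore the secondary operation attached to the relation $Sq^2 Sq^2\rho_2=0$, evaluated on $U$, namely $\Phi(U)\subseteq H^7(X;\mathbb Z_2)$. Because $\dim X\le 7$ and $\pi_7(BSpin(5))=0$, all higher obstructions land in groups $H^{\ge 8}(X;-)=0$. Thus the lift, equivalently the bundle $\alpha$, exists if and only if $0\in\Phi(U)$, which is condition (3).

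The main obstacle is identifying this secondary $k$‑invariant with the classical operation $\Phi$. I would first justify that the relation $Sq^2 Sq^2\rho_2=0$ genuinely holds — by Adem $Sq^2 Sq^2=Sq^3 Sq^1$, while $Sq^1\rho_2=0$ on mod‑$2$ reductions of integral classes — so that $\Phi$ is defined on $H^4(X;\mathbb Z)$, and then match the indeterminacy of the secondary obstruction, which comes from the $H^5(X;\mathbb Z_2)$‑worth of choices of lift to $P_5$, with the indeterminacy of $\Phi$. A further point requiring care is that $P=2U$ pins down $U$ only modulo the $2$‑torsion of $H^4(X;\mathbb Z)$, which is exactly why the statement quantifies existentially over $U$: the tower analysis is run for a fixed $U$, and conditions (1)–(3) are read as properties of that $U$. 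Once the $k$‑invariants of $BSpin(5)$ through dimension $7$ are pinned down and these two indeterminacies are reconciled, the equivalence follows.
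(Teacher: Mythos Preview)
The paper does not prove this statement: it is quoted from \v{C}adek--Van\v{z}ura \cite{cadek1} and closed with a \qed\ immediately after the enunciation, then used as a black box in the proof of Theorem~\ref{seven}. There is therefore no ``paper's own proof'' to compare your proposal against.

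For what it is worth, your sketch is the standard approach and is essentially the one \v{C}adek--Van\v{z}ura themselves use in the cited reference: identify $BSpin(5)\simeq BSp(2)$, read off $\pi_4=\mathbb Z$, $\pi_5=\pi_6=\mathbb Z_2$, $\pi_7=0$, and climb the first three Postnikov stages. The necessity of (1) via the half--Pontryagin class and of (2) via the Wu formula $Sq^2w_4=w_2w_4+w_6$ is correct, as is the identification of the primary obstruction with $Sq^2\rho_2(U)$ and of the secondary obstruction with the operation $\Phi$ attached to the relation $Sq^2Sq^2\rho_2=Sq^3Sq^1\rho_2=0$. The two points you flag as needing care --- pinning down that the second $k$-invariant restricts nontrivially to the fibre, and matching the $H^5(X;\mathbb Z_2)$--indeterminacy of the lift with the indeterminacy $Sq^2H^5(X;\mathbb Z_2)$ of $\Phi$ --- are exactly where the substance lies, and your outline does not carry them out in detail, but the strategy is sound.
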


Before proving Theorem\,\ref{seven}, we make a few remarks on the secondary cohomology operation $\Phi$. Given a space $Y$, $\Phi$ is defined on 
the kernel of the homomorphism 
$$Sq^2\circ \rho_2:H^4(Y;\mathbb Z)\longrightarrow H^6(Y;\mathbb Z_2)$$ and 
takes values in $H^7(Y;\mathbb Z_2)/Q$ where $Q=Sq^2 H^5(Y;\mathbb Z_2)$. If $u\in H^4(Y;\mathbb Z)$ is in the kernel of 
$Sq^2\circ \rho_2$, then $\Phi(u)$ is defined to be the coset 
$$\Phi(u) = U^* (x) + Sq^2(H^5(Y;\mathbb Z_2)$$
where $U$ is the lift of 
$$Sq^2\circ\rho_2\circ u: X\longrightarrow K(\mathbb Z_2,6)$$
to $E$, where $E$ is the pull back of the path fibration over $K(\mathbb Z_2,6)$ to $K(\mathbb Z,4)$ via $Sq^2\circ \rho_2$. Here $x\in H^7(E;\mathbb Z_2)\cong \mathbb Z_2$ is a generator. Note that if $H^6(Y;\mathbb Z_2)=0$, then 
$\Phi(u)$ is defined for all $u\in H^4(X;\mathbb Z)$. 

{\em Proof of Theorem\,\ref{seven}}. 
Let $X$ be an orientable closed smooth $7$-manifold. Assuming that $X$ is $W$-trivial and using Proposition\,\ref{firstprop}, the integral homology groups of $X$ can be readily seen to be of the form  

$$
H_i(X;\mathbb Z)=\left\{ \begin{array}{cl}
\mathbb Z & i=0\\
F& i=1\\
 F' & i=2\\
\mathbb Z^t\oplus F'' & i= 3\\
\mathbb Z^t \oplus F'& i=4\\
 F& i=5\\
0 & i=6\\
\mathbb Z & i=7
\end{array}\right.$$
where $F,F',F''$ are finite abelian groups with $F$ not containing any element of order $2$. In particular, we note that 
$H^6(X;\mathbb Z_2)=0$
and hence the secondary operation $\Phi$  is defined on the whole of $H^4(X;\mathbb Z)$. 

To complete the proof of the theorem it is enough to prove that
$$t=s''=0.$$
To prove this 
we first claim that 
$$\rho_2:H^4(X;\mathbb Z)\longrightarrow H^4(X;\mathbb Z_2)$$
is the zero homomorphism. Assuming the contrary, suppose that there exists $U\in H^4(X;\mathbb Z)$, $U\neq 0$ with $\rho_2(U)=W\neq 0$. This leads to a contradiction as follows. 
 If we let $P=2U$ we see that the first condition of Theorem\,\ref{cad2} is satisfied. Since $H^6(X;\mathbb Z_2)=0$, it follows that 
$Sq^2(W)=0$. We may now check that $0\in \Phi(U)$ as in \cite[ page 206]{ani1}. Thus  by Theorem\,\ref{cad2} there exists a 
vector bundle $\alpha$ over $X$ with 
$$w_4(\alpha)=W\neq 0.$$
This contradiction proves the claim. We may now look at the exact sequence 
$$H^4(X;\mathbb Z)\stackrel{\rho_2}\longrightarrow H^4(X;\mathbb Z_2)\stackrel{\beta}\longrightarrow H^5(X;\mathbb Z)\longrightarrow.$$
Then as $\rho_2=0$ we have that 
$$\beta : \mathbb Z_2^t\oplus \mathbb Z_2^{s'}\oplus \mathbb Z_2^{s''}\longrightarrow  F'$$
is a monomorphism. This implies, as before, that
$$t+s''=0$$
and hence $t=s''=0$.  



Conversely, if the homology of $X$ is of the given form and if additionally, $F'$ contains no elements of order $2$, then $X$ is a $\mathbb Z_2$-homology sphere and hence $W$-trivial by Proposition\,\ref{secondprop} (2). This completes the proof.

We have not been able to decide whether or not $F'$ can contain any element of order $2$. We derive the following necessary conditions for a closed smooth $7$-manifold to be $W$-trivial. 

\begin{Cor} If $X$ is a $W$-trivial closed smooth $7$-manifold, then 
\begin{enumerate}
\item The map 
$$Sq^2: H^2(X;\mathbb Z_2)\longrightarrow H^4(X;\mathbb Z_2)$$
given by $Sq^2(x) = x\smile x$ is an isomorphism. 
\item The Pontryagin square
$$\mathfrak{P}:H^2(X;\mathbb Z_2)=\mathbb Z_2^{s'}\longrightarrow H^4(X;\mathbb Z_4)=\mathbb Z_2^{u'}\oplus \mathbb Z_4^{v'}$$
is a monomorphism. 
\end{enumerate}
\end{Cor}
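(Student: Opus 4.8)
The plan is to derive both assertions directly from the \v{C}adek--Van\v{z}ura realizability theorem (Theorem~\ref{cadek1}) together with the $W$-triviality of $X$, avoiding any delicate manipulation of the cohomology ring. First I would assemble the homological input. By Theorem~\ref{seven} the integral homology of $X$ has the stated form, and the universal coefficient theorem then gives $H^i(X;\mathbb Z_2)\cong\mathbb Z_2^{s'}$ for $i=2,3,4,5$; in particular
$$\dim_{\mathbb Z_2}H^2(X;\mathbb Z_2)=\dim_{\mathbb Z_2}H^4(X;\mathbb Z_2)=s'.$$
Moreover $H^4(X;\mathbb Z)\cong F''$, which has odd order because $F''$ has no element of order $2$. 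Consequently $4\,H^4(X;\mathbb Z)=H^4(X;\mathbb Z)$, and since the kernel of $\rho_4$ is the image of multiplication by $4$, the reduction
$$\rho_4:H^4(X;\mathbb Z)\longrightarrow H^4(X;\mathbb Z_4)$$
is the zero homomorphism.

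I would then feed this into Theorem~\ref{cadek1}, whose function $\gamma$ is defined on $[X,BSO(7)]$; every class in $\mathrm{image}(\gamma)$ therefore arises from a genuine real bundle over $X$, and so by $W$-triviality its $w_2$ and $w_4$ coordinates vanish. On the other hand Theorem~\ref{cadek1} identifies $\mathrm{image}(\gamma)$ with $\{(a,b,c):\rho_4(c)=\mathfrak P(a)+i_*(b)\}$, and since $\mathrm{image}(\rho_4)=0$ this condition simplifies to $\mathfrak P(a)+i_*(b)=0$. Hence, given any pair $(a,b)\in H^2(X;\mathbb Z_2)\oplus H^4(X;\mathbb Z_2)$ with $\mathfrak P(a)+i_*(b)=0$, the triple $(a,b,0)$ lies in $\mathrm{image}(\gamma)$, and $W$-triviality forces $a=b=0$. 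The key intermediate fact I would record is thus the implication
$$\mathfrak P(a)+i_*(b)=0\ \Longrightarrow\ a=0\ \text{and}\ b=0.$$

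Both parts of the corollary then follow. Taking $b=0$ shows that $\mathfrak P(a)=0$ implies $a=0$, which is exactly the monomorphism statement (2). For (1), I would use the two identities $\rho_{4,2}\circ\mathfrak P(a)=a\smile a=Sq^2(a)$ and $\ker\rho_{4,2}=\mathrm{image}(i_*)$, the latter coming from the long exact sequence of the coefficient sequence $0\to\mathbb Z_2\to\mathbb Z_4\to\mathbb Z_2\to 0$. If $Sq^2(a)=0$, then $\mathfrak P(a)\in\ker\rho_{4,2}=\mathrm{image}(i_*)$, say $\mathfrak P(a)=i_*(b)$; since $\mathrm{image}(i_*)$ consists of elements of order dividing $2$, this gives $\mathfrak P(a)+i_*(b)=2\,i_*(b)=0$, whence $a=0$ by the implication displayed above. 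Thus $Sq^2:H^2(X;\mathbb Z_2)\to H^4(X;\mathbb Z_2)$ is injective, and as both sides have dimension $s'$ it is an isomorphism.

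The main obstacle is not any single hard step but the $\mathbb Z_4$-coefficient bookkeeping that makes the reduction in the last paragraph go through: one must verify that $\mathrm{image}(\rho_4)=0$, so that the realizability condition collapses to $\mathfrak P(a)+i_*(b)=0$, and one must identify $\ker\rho_{4,2}$ with $\mathrm{image}(i_*)$ and note that the latter is $2$-torsion, so that a class $a$ with $Sq^2(a)=0$ really does produce a solution of the equation controlled by Theorem~\ref{cadek1}. Once these coefficient computations are in place, both statements are immediate consequences of $W$-triviality.
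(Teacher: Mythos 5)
Your proof is correct, but it takes a genuinely different route from the paper's. The paper proves (1) first by invoking Woodward's classification theorem \cite[Theorem 1, page 176]{woodward}: if $x\neq 0$ in $H^2(X;\mathbb Z_2)$ satisfied $x^2=0$, that theorem would produce a bundle $\alpha$ with $w_2(\alpha)=x$, contradicting $W$-triviality; part (2) then follows from (1) in one line via $\rho_{4,2}\circ\mathfrak{P}=Sq^2$. You instead extract everything from Theorem\,\ref{cadek1}, which the paper already uses for Theorems\,\ref{second} and \ref{seven}: whenever $\mathfrak P(a)+i_*(b)=0$ the triple $(a,b,0)$ is realized by an oriented rank-$7$ bundle, so $W$-triviality forces $a=b=0$; part (2) is the case $b=0$, and part (1) follows from the Bockstein bookkeeping $\ker\rho_{4,2}=\mathrm{image}(i_*)$ together with the fact that $\mathrm{image}(i_*)$ is $2$-torsion. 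Note the logical order is reversed: the paper deduces (2) from (1), while you in effect deduce (1) from the injectivity statement underlying (2). Your observation that $\rho_4=0$ is correct but superfluous, since you only ever take $c=0$ and $\rho_4(0)=0$ holds trivially. What each approach buys: the paper's argument is shorter but requires the additional external reference to Woodward, whereas yours is self-contained relative to the tools already deployed in the paper and yields the slightly stronger intermediate fact that any pair $(a,b)$ satisfying the \v{C}adek--Van\v{z}ura relation with $c=0$ must vanish. Both arguments rely on Theorem\,\ref{seven} for the count $\dim_{\mathbb Z_2}H^2(X;\mathbb Z_2)=\dim_{\mathbb Z_2}H^4(X;\mathbb Z_2)=s'$, which is what upgrades injectivity of $Sq^2$ to an isomorphism.
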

\begin{proof}
By Theorem\,\ref{seven} 
$$H^2(X;\mathbb Z_2)\cong \mathbb Z_2^{s'}\cong H^4(X;\mathbb Z_2). $$
Now if $f (x)=x^2=0$ for some non-zero $x\in H^2(X;\mathbb Z_2)$, then by \cite[Theorem 1, page 176]{woodward} there exists a vector bundle $\alpha$ with $w_2(\alpha)=x\neq 0$. This contradiction proves (1). 

Next, let 
$$\rho_{4,2}:H^4(X;\mathbb Z_4)\longrightarrow H^4(X;\mathbb Z_2)$$
be the homomorphism induced by the coefficient surjection. Then as 
$$\rho_{4,2}\circ\mathfrak{P}=Sq^2,$$
(2) follows from (1). This completes the proof. 
\end{proof}



\begin{Prop}
 If $X,Y$ are closed smooth manifolds of dimension  $n=3,5$ or $6$ and are $W$-trivial, then so is their connected sum.
\end{Prop}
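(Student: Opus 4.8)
The plan is to reduce everything to the homological characterizations already proved, namely Theorem~\ref{main} in dimensions $3,5$ and Theorem~\ref{second} in dimension $6$, together with a single computation describing the homology of a connected sum. Write $X\#Y = X_0\cup_{S^{n-1}} Y_0$, where $X_0$ and $Y_0$ are obtained by deleting the interior of an $n$-disc from $X$ and $Y$ respectively and are glued along their common boundary sphere $S^{n-1}$. The engine of the argument is the splitting
$$\widetilde H_i(X\#Y;R)\cong \widetilde H_i(X;R)\oplus \widetilde H_i(Y;R),\qquad 0<i<n,$$
valid for any coefficient ring $R$ over which $X$ and $Y$ are orientable, and hence unconditionally for $R=\mathbb Z_2$.

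First I would establish this splitting. From the long exact sequence of the pair $(X,X_0)$ together with excision one gets $H_*(X,X_0;R)\cong \widetilde H_*(S^n;R)$, concentrated in degree $n$; this yields $\widetilde H_i(X_0;R)\cong \widetilde H_i(X;R)$ for all $i\le n-1$ and $H_n(X_0;R)=0$, and likewise for $Y_0$. Feeding these into the reduced Mayer--Vietoris sequence for $X_0\cup_{S^{n-1}}Y_0$, the range $0<i<n-1$ splits immediately, and in degree $i=n-1$ one uses that the boundary sphere bounds in each punctured manifold, so that the map $\widetilde H_{n-1}(S^{n-1};R)\to \widetilde H_{n-1}(X_0;R)\oplus\widetilde H_{n-1}(Y_0;R)$ is zero. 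This produces the displayed splitting throughout $0<i<n$.

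Dimensions $3$ and $5$ are then immediate. There $W$-triviality is equivalent to being a $\mathbb Z_2$-homology sphere by Theorem~\ref{main}, so $\widetilde H_i(X;\mathbb Z_2)=\widetilde H_i(Y;\mathbb Z_2)=0$ for $0<i<n$; taking $R=\mathbb Z_2$ in the splitting gives $\widetilde H_i(X\#Y;\mathbb Z_2)=0$ in the same range, whence $X\#Y$ is again a $\mathbb Z_2$-homology $n$-sphere and is $W$-trivial by Theorem~\ref{main} (equivalently by Proposition~\ref{secondprop}(2), since $3$ and $5$ are not powers of $2$). The substance is dimension $6$, where $W$-triviality is genuinely weaker than being a $\mathbb Z_2$-homology sphere. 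By Theorem~\ref{second}, $X$ and $Y$ have integral homology of the listed shape, with data $(F_X,F'_X,t_X)$ and $(F_Y,F'_Y,t_Y)$, all four finite groups having no element of order $2$. Applying the splitting with $R=\mathbb Z$ in degrees $1$ through $5$ I read off
$$H_1=F_X\oplus F_Y,\quad H_2=F'_X\oplus F'_Y,\quad H_3=\mathbb Z^{t_X+t_Y}\oplus(F'_X\oplus F'_Y),\quad H_4=F_X\oplus F_Y,\quad H_5=0,$$
with $H_0=H_6=\mathbb Z$. Setting $F=F_X\oplus F_Y$, $F'=F'_X\oplus F'_Y$ and $t=t_X+t_Y$, this is exactly the pattern of Theorem~\ref{second}, so $X\#Y$ is $W$-trivial by its converse direction.

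The main obstacle is conceptual rather than computational: one must resist proving the statement uniformly as ``$\mathbb Z_2$-homology sphere is preserved,'' since that fails in dimension $6$ (for instance $S^3\times S^3$). The correct organizing principle is that each dimension's characterization is stated so that the connected-sum splitting carries it to itself, and the one place requiring care is the $2$-torsion bookkeeping in dimension $6$: here I rely on the elementary fact that a direct sum of finite abelian groups with no element of order $2$ again has no element of order $2$, so that the assembled $F$ and $F'$ satisfy the hypotheses of Theorem~\ref{second}.
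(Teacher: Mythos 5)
Your proof is correct and takes essentially the same route as the paper: reduce to the homological characterizations (Theorem~\ref{main} in dimensions $3,5$ and Theorem~\ref{second} in dimension $6$) and transport them across the connected-sum homology splitting, noting that direct sums of groups without $2$-torsion have no $2$-torsion. The only cosmetic difference is that you establish the splitting in degree $n-1$ directly from Mayer--Vietoris together with orientability (the boundary sphere bounding in each punctured piece), whereas the paper quotes the splitting only for $i\neq n-1,n$ and handles the remaining degree via Poincar\'{e} duality.
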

\begin{proof}
First note that if $X,Y$ are two orientable closed smooth $n$-manifolds, then their connected sum 
$X\# Y$ is also an orientable closed smooth manifold and 
$$H_i(X\# Y;R)\cong H_i(X;R)\oplus H_i(Y;R)$$
for $i\neq n, n-1$ and all commutative rings $R$. 

Assume that $X,Y$ are $3$-dimensional. Then by Theorem\,\ref{main}, $X$ and $Y$ 
are $\mathbb Z_2$-homology spheres. Since 
$$H_1(X\#Y;\mathbb Z_2)\cong H_1(X;\mathbb Z_2)\oplus H_1(Y;\mathbb Z_2)=0$$
it follows that $H^1(X\#Y;\mathbb Z_2)=0$. By Poincar\'{e} duality we have that 
$H_2(X\#Y;\mathbb Z_2)=0$. Thus $X\#Y$ is a $\mathbb Z_2$-homology $3$-sphere. 
Hence by Theorem\,\ref{main}, $X\#Y$ is $W$-trivial. 

The proof in the case that $X,Y$ are $5$-dimensional is similar and we omit the proof. 
Finally we assume that $X,Y$ are $6$-dimensional. The integral homology groups of $X$ and $Y$ are as given in Theorem\,\ref{second}. It can now be verified that 
the integral homology groups of the connected sum $X\#Y$ is also of the form 
as given in Theorem\,\ref{second} and hence $X\#Y$ is $W$-trivial. 
\end{proof}

{\em Acknowledgements.} We express our appreciation to the anonymous referee for her/his detailed comments and suggestions which 
has improved the presentation and corrected several inaccuracies. In particular, we would like to thank the referee for pointing out the reference  
\cite{thomas1} and suggesting a way to shorten the proof of Proposition\,\ref{wellwell}.

\section{Conflict of interest statement}

On behalf of all authors, the corresponding author states that there is no conflict of interest.
\section{Data availability statement}
We do not analyse or generate any datasets, because our work proceeds within a theoretical and mathematical approach.

\end{document}